\newcommand{\mX}{{\mathcal{X}}}
\newcommand{\mY}{{\mathcal{Y}}}
\newcommand{\mZ}{{\mathcal{Z}}}
\newcommand{\Link}{{\rm Link}}
\newtheorem{thm}{Theorem}[section]
\newtheorem{lem}[thm]{Lemma}
\newtheorem{cor}[thm]{Corollary}
\newtheorem{prop}[thm]{Proposition}
\newtheorem{conj}[thm]{Conjecture}
\theoremstyle{definition}
\newtheorem{rem}[thm]{Remark}
\newtheorem*{ack}{Acknowledgments}      % \renewcommand{\theack}{}
\newtheorem{defn-thm}[thm]{Definition--Theorem}  %!!!!!!!!!!!!!!!!!!!!!!!!
\newtheorem{defn-prop}[thm]{Definition--Proposition}  %!!!!!!!!!!!!!!!!!!!!!!!!
\newtheorem{defn-lem}[thm]{Definition--Lemma}  %!!!!!!!!!!!!!!!!!!!!!!!!
\newtheorem{convention}[thm]{Convention}  %!!!!!!!!!!!!!!!!!!!!!!!!!!!
\theoremstyle{remark}
\begin{document}
\title{Finiteness of fundamental groups}

\author{Zhiyu Tian}
\address{CNRS, Institut Fourier UMR 5582, 100 Rue des Math\'ematiques BP74, 38402 Saint-Martin d'H\`eres Cedex, France}
\email{zhiyu.tian@ujf-grenoble.fr}

\author    {Chenyang Xu}

\address   {Beijing International Center for Mathematical Research,
       Beijing 100871, China}
\email     {cyxu@math.pku.edu.cn}
\begin{abstract} We show that the finiteness of the fundamental groups of the smooth locus of {\it lower} dimensional log Fano pairs would imply the finiteness of the local fundamental group of klt singularities. As an application, we  verify that the local fundamental group  of a three dimensional klt singularity and the fundamental group of the smooth locus of a  three dimensional Fano  variety with canonical singularities are always finite.
\end{abstract}

\date{\today}

\maketitle{}
\tableofcontents
\section{Introduction}We work over the field $\mathbb{C}$ of complex numbers. In this note, we aim to study the local fundamental ${\pi}_1({\rm Link}(x\in X))$ of a klt singularity as well as the the fundamental group ${\pi}_1(X^{\rm sm})$ of the smooth open locus of a log Fano variety. These two fundamental groups are basic objects which appear naturally in many questions (e.g. \cite{Namikawa13, GKP13, KX15} etc.).

We have the following two conjectures.

\begin{conj}[Global Finiteness]\label{conj-gf}
Let $(X,D)$ be a log Fano pair, i.e., $(X,D)$ has klt singularities, and $-(K_X+D)$ is ample. Assume $D=\sum_i\frac{n_i-1}{n_i}D_i$.   Let $X^0\subset X$ be a open subset with ${\rm Codim}_{X}(X\setminus X^0)\ge 2$ such that $(X^0,{\rm Supp} (D) |_{X^0})$ is simple normal crossing.  Then the orbifold fundamental group  $\pi_1(X^0, D|_{X^0})$ is finite.
\end{conj}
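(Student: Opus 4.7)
The plan is to proceed by induction on $\dim X$, coupling Conjecture \ref{conj-gf} with the local finiteness statement for klt singularities via the local-to-global reduction alluded to in the abstract. For the base case $\dim X = 1$, the pair is $(\mathbb{P}^1, \sum \tfrac{n_i-1}{n_i} p_i)$ with $\sum (1 - 1/n_i) < 2$, and the orbifold fundamental group is classically a finite spherical (cyclic, dihedral, or platonic triangle) group. For $\dim X = 2$, one uses the classification of log del Pezzo surfaces with standard coefficients together with direct analysis of their smooth loci.

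For the inductive step, I would first reduce to the case of empty boundary. Taking the global cyclic Kawamata cover $\phi \colon Y \to X$ ramified to order $n_i$ along each $D_i$ produces a quasi-\'etale cover satisfying $K_Y = \phi^*(K_X + D)$, so $(Y,0)$ is again a klt Fano. The orbifold group $\pi_1(X^0, D|_{X^0})$ fits in a short exact sequence involving $\pi_1(Y^{\sm})$ and a finite quotient built from the $\mathbb{Z}/n_i\mathbb{Z}$ factors, so it suffices to bound $\pi_1(Y^{\sm})$ for klt Fanos.

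Next, I would extract a Mori fiber space structure $Y \dashrightarrow Z$ with general fiber $F$, both of dimension strictly smaller than $\dim Y$. Using canonical bundle formulas one endows $Z$ with a boundary $\Delta_Z$ making $(Z, \Delta_Z)$ a log Fano pair, and $(F, D_F)$ is automatically log Fano. The goal is then to establish a homotopy exact sequence of the form
$$\pi_1(F^{\sm}, D_F) \longrightarrow \pi_1(Y^{\sm}) \longrightarrow \pi_1(Z^{\sm}, \Delta_Z) \longrightarrow 1$$
by a Lefschetz/Nori-type argument applied to the fibration over the smooth locus, and then close the induction by the hypothesis applied to $F$ and $Z$.

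The main obstacle I anticipate is twofold. First, the coefficients of $\Delta_Z$ arising from the discriminant of $Y \to Z$ need not be of standard form $\tfrac{n-1}{n}$, so the inductive hypothesis may not literally apply to $Z$; some bootstrapping (or strengthening of the conjecture to allow arbitrary standard orbifold coefficients in the relative setting) seems unavoidable. Second, verifying the fibration exact sequence in the presence of klt singularities, non-proper smooth loci, and boundary ramification is delicate. A Kähler--Einstein approach via a Myers-type bound would bypass LMMP altogether, but such a metric is not known to exist for general klt log Fano pairs. In practice the paper appears to sidestep these global obstructions by exploiting the explicit geometry of three-dimensional canonical Fanos, where classification allows one to argue case-by-case.
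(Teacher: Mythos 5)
The statement you are proving is a \emph{conjecture}, and the paper does not give a proof of it in general: Conjecture~\ref{conj-gf} is explicitly left open and is used only as a hypothesis in Theorem~\ref{thm-lf}, Lemma~\ref{l-sl}, Lemma~\ref{l-sm} and Proposition~\ref{thm-cover}. The only cases the paper actually establishes are dimension two (cited from \cite{FKL93, GZ94, GZ95, KM99}) and a special slice of dimension three, namely Theorem~\ref{thm-3ter} (Fano threefolds with canonical singularities and $D=0$). So there is no ``paper's own proof'' of the full conjecture to compare against.

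For the case the paper does prove, the route is very different from yours. You propose a Mori fiber space reduction together with a homotopy exact sequence for the fibration and an inductive hypothesis on base and fiber. The paper instead builds, via Proposition~\ref{thm-cover} (which itself relies on the local finiteness results fed in from lower dimension), a normal analytic space $Z$ with a properly discontinuous $G=\pi_1(X^{\rm sm})$-action and $Z/G=X$; then it computes $\chi_{(2)}(Z,\mathcal O_Z)$ with a Reid-style orbifold Riemann--Roch (Theorem~\ref{orbifoldRR}), shows this index is strictly positive using Miyaoka's semipositivity ($-K_X\cdot c_2(X)>0$, Corollary~\ref{weakfano}), kills higher $L^2$-cohomology by the Demailly $L^2$-vanishing (Theorem~\ref{L2vanishing}), and finally applies Gromov's Poincar\'e-series argument to rule out an infinite cover. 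No MFS, no homotopy exact sequence.

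On the merits of your proposal, the obstacles you flag at the end are genuine and, at present, fatal; I would add one more at an earlier step. The ``global cyclic Kawamata cover'' reducing to empty boundary does not exist in general: a degree-$n_i$ root cover ramified along $D_i$ requires $n_i D_i$ to be Cartier (or at least a suitable global $n_i$-torsion class in $\operatorname{Cl}(X)$), and merely knowing $D_i$ is $\mathbb{Q}$-Cartier only gives you some $m_iD_i$ Cartier with $m_i$ possibly a proper multiple of $n_i$; taking that root would overshoot and destroy $K_Y=\phi^*(K_X+D)$. This is precisely why the paper uses \cite[Lemma~6.3]{MP04} only as a \emph{local} statement in Lemma~\ref{l-sm}. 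Beyond that, the canonical bundle formula producing a boundary $\Delta_Z$ with standard coefficients on the base of a Mori fiber space is itself an open problem (effective b-semiampleness/b-nefness), and the fibration exact sequence for $\pi_1$ of \emph{smooth loci} of singular log Fano pairs is not known; both of these would need to be theorems before your induction could close. So the proposal is a reasonable sketch of a hoped-for strategy, but it is not a proof, and it does not match the technique the paper actually uses for the cases it does settle.
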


\begin{conj}[Local Finiteness]\label{conj-lf}
Let $(X,D)$ be a  klt pair. Then $\pi_1(\Link(x\in X))$ is finite.
\end{conj}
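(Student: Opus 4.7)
The plan is to prove Conjecture \ref{conj-lf} by induction on $n = \dim X$, reducing the local finiteness statement in dimension $n$ to the global finiteness statement (Conjecture \ref{conj-gf}) in dimension $n-1$. The bridge between the two is the Koll\'ar component (plt blow-up) construction: for a klt pair $(x \in X, D)$, one produces a projective birational morphism $f \colon Y \to X$ extracting a single prime divisor $E$ over $x$ such that $(Y, D_Y + E)$ is plt and both $-(K_Y + D_Y + E)$ and $-E$ are $f$-ample, where $D_Y$ is the birational transform of $D$. By adjunction, $(E, D_E)$ is a log Fano pair of dimension $n - 1$, with $D_E$ the different of $(Y, D_Y + E)$ along $E$; the fractional coefficients of $D_E$ encode the transverse (cyclic) orbifold structure of $Y$ along $E$.

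The second step is a topological comparison between the link of $x \in X$ and the orbifold $(E, D_E)$. Choosing a small analytic neighborhood $U$ of $x$ and setting $V = f^{-1}(U)$, the space $V$ deformation retracts onto $E$ while $V \setminus E$ is homotopy equivalent to $\Link(x \in X) \times (0,1)$. Since $(Y, D_Y + E)$ is plt, the transverse singularities of $Y$ along $E$ are cyclic quotient singularities, so a tubular neighborhood of the orbifold smooth locus $E^0$ of $(E, D_E)$ inside $V \setminus E$ is an orbifold $\mathbb{C}^{*}$-bundle over $(E^0, D_E|_{E^0})$. A Seifert--van Kampen argument, carried out along the stratification of $V$ by the singularities of $E$ and the transverse singularities of $Y$, should yield an exact sequence
\[
\mathbb{Z} \longrightarrow \pi_1(\Link(x \in X)) \longrightarrow \pi_1(E^0, D_E|_{E^0}) \longrightarrow 1,
\]
in which the image of $\mathbb{Z}$ is generated by a small loop around $E$. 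That this loop has finite order should follow from $E$ being $\mathbb{Q}$-Cartier with $-E|_E$ ample: a positive multiple of $E$ is Cartier near $E$ and trivialises on the punctured neighborhood.

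Combining the two steps, the finiteness of $\pi_1(E^0, D_E|_{E^0})$ supplied by Conjecture \ref{conj-gf} in dimension $n-1$ forces the finiteness of $\pi_1(\Link(x \in X))$, closing the induction. The main obstacle is precisely the topological comparison in the second step: one must show, carefully and uniformly, both that the cyclic monodromy around $E$ has finite order in $\pi_1(\Link(x \in X))$ and that the correct target of the surjection is the orbifold fundamental group of $(E, D_E)$ weighted by the different, rather than the plain $\pi_1(E^{\mathrm{sm}})$ of the underlying variety. In the three-dimensional case announced in the abstract, the base of the induction reduces to global finiteness for two-dimensional log del Pezzo pairs, which can be verified by direct analysis of the smooth loci of klt del Pezzo surfaces via their explicit classification.
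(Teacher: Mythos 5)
Your overall route matches the paper's: extract a Koll\'ar component $E$ via a plt blow-up, use adjunction to realize $(E,\Delta_E)$ as a log Fano orbifold of dimension $n-1$, analyze the punctured neighborhood via the transverse $S^1$-bundle structure over the orbifold-smooth locus of $E$, and close the induction by appealing to Conjecture~\ref{conj-gf} one dimension down. The base of the induction (the surface case of Conjecture~\ref{conj-gf}) is also handled as you say.

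However, the precise form of your postulated exact sequence
\[
\mathbb{Z} \longrightarrow \pi_1(\Link(x\in X)) \longrightarrow \pi_1(E^0, D_E|_{E^0}) \longrightarrow 1
\]
is not what the stratified picture yields, and this is where the genuine work lies. The retraction $\pi\colon U^0 \to E$ maps the link onto \emph{all} of $E$, not onto $E^0$, and over the singular strata of $E$ (as well as over the non-transversally-cyclic-quotient locus of $Y$) the fibers of $\pi$ are links of higher-dimensional singularities, not circles or punctured disks. There is therefore no natural homomorphism $\pi_1(\Link(x\in X)) \to \pi_1(\mathcal{E}^0)$, and the Seifert--van Kampen bookkeeping you gesture at cannot be expected to produce this sequence. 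What one does get is an $S^1$-bundle exact sequence only over the restricted open set $V^0 := \pi^{-1}(E^0)\setminus E$, namely $\pi_1(\mathbb{D}^\circ) \to \pi_1(V^0) \to \pi_1(\mathcal{E}^0) \to 1$. The missing key ingredient, and the paper's main technical contribution (Lemma~\ref{l-codim2}), is the separate statement that the inclusion $V^0 \hookrightarrow U^0$ induces a \emph{surjection} $\pi_1(V^0) \twoheadrightarrow \pi_1(U^0)$. This is proved by a Whitney-stratification/stratified-Morse-theory argument (Goresky--MacPherson): adding strata of complex codimension $\geq 2$ on $E$ back in, one compares the full link of each stratum to its collared boundary, and the codimension hypothesis gives $\pi_0$-bijectivity and $\pi_1$-surjectivity of the boundary inclusion. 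Your proposal, by instead trying to understand the fibers over the bad locus directly, would run into the fact that those fibers are as complicated as general links of klt singularities (so you would be trying to analyze the very thing you are proving things about). Replacing your exact sequence with the two-step argument (bundle exact sequence over $V^0$ plus surjectivity $\pi_1(V^0)\twoheadrightarrow\pi_1(U^0)$) and supplying the stratified-Morse-theoretic proof of the surjectivity is what is needed to make the proof complete.
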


As a folklore question, Conjecture \ref{conj-gf} has a long history (see e.g. \cite{Zhang95}, \cite[Question 0.11]{AIM07}). It has been confirmed for surfaces in \cite{FKL93, GZ94, GZ95, KM99}. On the other hand, Conjecture \ref{conj-lf}, as far as we know, was first formulated by Koll\'ar \cite[8.24]{Kollar13}.

A close relation of these two questions was first established in \cite{Xu14}, using the local-to-global induction. More precisely,  to study the local fundamental group of a klt singularity of dimension $n$,  we can use the minimal model program as established in \cite{BCHM10} to extract a Koll\'ar component which admits a natural structure of log Fano pairs and then use the information we know for an $(n-1)$-dimensional log Fano pairs. For the second step of studying the $n$-dimensional log Fano variety, since we know that the fundamental group of the compact total space is trivial, we can use the information of ($n$-dimensional) singularities to connect the fundamental of the smooth part and the fundamental group of the total space.

This strategy has been successfully applied in \cite{Xu14} to prove a weaker result which says that the algebraic fundamental groups, i.e., the pro-finite completions of the above two groups, are always finite (see also \cite{GKP13, KX15}).  In this note, we will first show that half of this strategy still works for topological fundamental group.

\begin{thm}\label{thm-lf}
Assume that Conjecture \ref{conj-gf} holds for dimension at most $n-1$, then Conjecture \ref{conj-lf} holds for dimension $n$.
\end{thm}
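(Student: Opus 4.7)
The plan is to execute the first half of the local-to-global induction sketched in the introduction: reduce the $n$-dimensional local problem to the $(n-1)$-dimensional global problem by extracting a Koll\'ar component, then close the loop with a standard $S^1$-bundle argument. Starting from an $n$-dimensional klt singularity $(x\in X)$, I would invoke \cite{BCHM10} in the form used by \cite{Xu14} to produce a plt blow-up $\pi\colon Y\to X$ with a single exceptional Koll\'ar component $E$: so $-E$ is $\pi$-ample and $(E,D_E:=\operatorname{Diff}_E(0))$ is a klt log Fano pair of dimension $n-1$. By Shokurov's formula for the different, $D_E$ has the standard form $\sum\frac{m_j-1}{m_j}D_j$ required in Conjecture~\ref{conj-gf}, so that conjecture in dimension $n-1$ applies and yields the finiteness of the orbifold fundamental group $\pi_1(E^{\rm sm},D_E)$.

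The next task is to transport this finiteness back to the link. Since $\pi$ is an isomorphism away from $x$, $\Link(x\in X)$ is homeomorphic to the boundary of a small analytic tubular neighborhood $U$ of $E$ in $Y$. Using the plt hypothesis together with the local analytic model of plt pairs along a prime divisor (cf.\ \cite{Kollar13}), $U\setminus E$ deformation retracts onto an orbifold $S^1$-bundle over $(E,D_E)$ whose orbifold multiplicities along each component $D_j$ of $D_E$ are precisely the denominators $m_j$ appearing in the different. Let $\tau\colon\widetilde E\to E$ denote the finite orbifold universal cover of $(E,D_E)$; pulling the orbifold bundle back along $\tau$ then produces a genuine $S^1$-bundle $\widetilde L\to\widetilde E$ that finitely covers $\Link(x\in X)$, and whose base $\widetilde E$ has trivial topological $\pi_1$.

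The Euler class of $\widetilde L\to\widetilde E$ is a non-zero rational multiple of $\tau^* c_1(\mathcal O_E(-E))$. Since $-E|_E$ is $\mathbb Q$-ample on $E$, this class is non-zero in $H^2(\widetilde E,\mathbb Q)$ and pairs non-trivially with some algebraic curve class. Combined with the Hurewicz theorem on the simply-connected base $\widetilde E$, the homotopy long exact sequence
\[ \pi_2(\widetilde E)\longrightarrow \pi_1(S^1)\longrightarrow \pi_1(\widetilde L)\longrightarrow \pi_1(\widetilde E)=1 \]
then forces the generator of $\pi_1(S^1)$ to have finite order in $\pi_1(\widetilde L)$, so $\pi_1(\widetilde L)$ is a finite cyclic group. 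Finiteness of $\pi_1(\Link(x\in X))$ follows, since $\widetilde L$ covers it with finite degree.

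I expect the principal obstacle to lie in the second paragraph: one must rigorously identify the link of $x\in X$ globally, not merely generically along $E$, with an orbifold $S^1$-bundle over $(E,D_E)$ whose orbifold multiplicities match those of the different. This should follow from the standard analytic models of plt pairs along a divisor together with a careful analysis at the singular points of $Y$ lying on $E$, but it is the only step that is not essentially formal; the remainder of the argument is a packaging of the homotopy long exact sequence of a circle fibration with non-zero Euler class.
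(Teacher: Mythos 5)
Your overall strategy is the same as the paper's (extract a Koll\'ar component, apply Conjecture~\ref{conj-gf} in dimension $n-1$, and then use the Seifert/$S^1$-bundle structure of the punctured neighborhood plus ampleness of $-E|_E$ to finish). The Euler-class and homotopy exact sequence argument at the end is also the same one the paper invokes via \cite{Xu14}.

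However, there is a genuine gap, and it is exactly where you predict the obstacle will lie, except that the obstacle is not of the kind you expect. You assert that the punctured tubular neighborhood $U\setminus E$ deformation retracts onto an orbifold $S^1$-bundle over the orbifold $(E,D_E)$, with the orbifold structure given solely by the different $D_E$, and that one can patch this up by ``a careful analysis at the singular points of $Y$ lying on $E$.'' This is false in general, and cannot be repaired by a more careful local analysis: the orbifold structure coming from the different only encodes the transverse structure of $(Y,E)$ in codimension one. Over strata of $E$ of codimension $\geq 2$ the transverse singularity of $(Y,E)$ is an arbitrary plt pair of lower dimension, not a cyclic quotient, and the fiber of the Goresky--MacPherson retraction $U\setminus E\to E$ over such a point is the punctured cone over a higher-dimensional link, not a circle. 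As the paper points out, the bad locus $U^0\setminus V^0$ is typically of the same dimension as $U^0$, so one cannot simply ignore it. In particular, there is no homotopy equivalence of $U\setminus E$ with an honest orbifold $S^1$-bundle over $(E,D_E)$, and the construction of a genuine finite $S^1$-covering $\widetilde L\to \Link(x\in X)$ by pulling back along the orbifold universal cover of $(E,D_E)$ does not go through.

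The paper's proof circumvents this by restricting to $V^0=\pi^{-1}(E^0)\setminus E^0$, where $E^0\subset E$ is a big open set over which the plt adjunction model really does give a differentiable $\mathbb{D}^\circ$-bundle over the orbifold $\mathcal{E}^0$; there the argument you wrote works and gives finiteness of $\pi_1(V^0)$. The missing ingredient, and the technical heart of the proof, is Lemma~\ref{l-codim2}: a stratified Morse-theory argument (following \cite{Gor81,GM83,GM88}) showing that removing the part of $U^0$ lying over the codimension $\geq 2$ strata of $E$ does not change $\pi_1$ up to surjection, i.e.\ $\pi_1(V^0)\twoheadrightarrow\pi_1(U^0)$. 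Without some statement of this kind your proof does not close, because finiteness of $\pi_1(V^0)$ alone says nothing about the loops contained in $U^0\setminus V^0$.
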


To prove this, the main ingredient is that we need to understand when we extract the Koll\'ar component, how the topology of its  punctured neighborhood $U^0$ relates to the Koll\'ar component itself. In general, we know that there is a topological contraction map from the punctured neighborhood to the Koll\'ar component. Let $V^0$ be the topological open set which contracts to the (orbifold) smooth part of the Koll\'ar component. Restricting on $V^0$, the fiber of this contraction is easy to understand. In fact, the standard calculation shows that this map is homotopic to an $S^1$-bundle. However, it is more tricky to understand the fiber over the singular points on the Koll\'ar component. For instance, the dimension of $U^0\setminus V^0$ is usually the same as $\dim (U^0)$.

Nevertheless, using the stratification theory for complex varieties (see e.g. \cite{GM88}), we can show that the fundamental group of $V^0$ indeed surjects to the fundamental group of $U^0$, hence we can conclude the finiteness of $\pi_1(U^0)$.

As a corollary, we immediately know the following result.
\begin{cor}\label{cor-3l}
 If $x\in X$ is a  three dimensional klt algebraic singularity, then
$\pi_1({\rm Link}(x\in X))$ is finite.
\end{cor}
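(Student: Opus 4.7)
The plan is to apply Theorem~\ref{thm-lf} with $n=3$. This immediately reduces the finiteness of $\pi_1(\Link(x\in X))$ for a three-dimensional klt singularity to verifying Conjecture~\ref{conj-gf} in dimensions at most $2$.

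The one-dimensional case of Conjecture~\ref{conj-gf} is classical: a klt log Fano pair of dimension one is of the form $(\mathbb{P}^1,\sum\frac{n_i-1}{n_i}p_i)$ with $\sum\frac{n_i-1}{n_i}<2$, so the orbifold fundamental group is a finite spherical $2$-orbifold group (cyclic, dihedral, or one of the tetrahedral/octahedral/icosahedral triangle groups).

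The substantive ingredient is the two-dimensional case, namely that the orbifold fundamental group of the smooth-plus-SNC locus of a log del Pezzo pair with standard coefficients is finite. This is (the combined content of) the classical surface results of \cite{FKL93, GZ94, GZ95, KM99}. If one only invokes the no-boundary version recorded in those references, I would bridge the gap by passing to a Kawamata cover ramified with index $n_i$ along each $D_i$: the cover is again a klt del Pezzo surface (with at worst quotient singularities), its $\pi_1$ of the smooth locus agrees with $\pi_1(X^0,D|_{X^0})$ up to a finite-index extension, and finiteness upstairs then gives finiteness downstairs.

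I do not expect a serious new obstacle at this stage of the argument: the deep content is already concentrated in Theorem~\ref{thm-lf}, whose proof is the one explaining how to descend from a Koll\'ar-component log Fano to the link of the singularity. For the corollary itself, the only care needed is to check that the precise orbifold formulation of Conjecture~\ref{conj-gf} in dimension two is covered (directly or after a Kawamata cover) by the cited surface results.
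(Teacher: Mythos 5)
Your proposal is exactly the paper's argument: apply Theorem~\ref{thm-lf} with $n=3$ and cite the classical surface results (\cite{FKL93} et al.) for Conjecture~\ref{conj-gf} in dimension at most~$2$. The extra remarks about dimension one and about bridging to the orbifold formulation via a Kawamata cover are sensible elaborations but do not change the route, which matches the paper's one-line proof.
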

We note that the finiteness of the pro-finite completion of the local fundamental group of a three dimensional klt singularity was known in \cite{SW94, Xu14}.

Assume that we know the finiteness of the local fundamental group for klt singularities with dimension at most $n$, then for any $n$-dimensional klt quasi-projective variety we can construct a complex space $Z$ with a properly discontinuously action by $G:=\pi_1(X^{\rm sm})$ such that $Z/G=X$ (see Proposition \ref{thm-cover}).

Of course we hope when the pair $(X,D)$ is log Fano, such an  covering $(Z,\Delta)$ is always of finite degree.
Unfortunately, we are lack of tools to show this in the general case. Nevertheless,  we confirm Conjecture \ref{conj-gf} for Fano threefolds with canonical singularities.

\begin{thm}\label{thm-3ter}
If $X$ is a Fano threefold with canonical singularities, then $\pi_1(X^{\rm sm})$ is finite.
\end{thm}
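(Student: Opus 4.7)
The plan is to combine Corollary~\ref{cor-3l} with the cover construction of Proposition~\ref{thm-cover}, and then use the ampleness of $-K_X$ together with boundedness of klt Fano threefolds to force the resulting cover to have finite degree.

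By Corollary~\ref{cor-3l} every $3$-dimensional klt singularity has finite local fundamental group, so the hypothesis of Proposition~\ref{thm-cover} holds and we obtain a normal complex analytic space $Z$ with a properly discontinuous action of $G := \pi_{1}(X^{\mathrm{sm}})$ such that $Z/G = X$. The quotient map $\pi \colon Z \to X$ is \'etale over $X^{\mathrm{sm}}$ and quasi-\'etale globally, so $K_Z = \pi^{*}K_X$, $Z$ has canonical singularities pulled back from $X$, and $-K_Z$ is the pullback of the ample $\mathbb{Q}$-Cartier divisor $-K_X$.

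For every finite-index normal subgroup $N \trianglelefteq G$, the intermediate cover $Z/N \to X$ is a finite analytic quasi-\'etale cover of the projective variety $X$, and hence by Grauert--Remmert algebraization extends to a finite quasi-\'etale morphism of normal projective algebraic threefolds $\pi_{N} \colon X_{N} \to X$. Since $-K_X$ is ample, $X_{N}$ is itself a canonical Fano threefold with $\operatorname{vol}(-K_{X_{N}}) = [G : N] \cdot \operatorname{vol}(-K_X)$; by Birkar's theorem on boundedness of klt Fano varieties in fixed dimension, $\operatorname{vol}(-K_{X_{N}})$ is bounded above by a universal constant $C$, so $[G : N] \le C / \operatorname{vol}(-K_X) =: N_0$ for every such $N$. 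In particular the profinite completion satisfies $|\widehat{G}| \le N_0 < \infty$.

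To upgrade the algebraic finiteness $|\widehat{G}| < \infty$ to the topological finiteness $|G| < \infty$ one needs residual finiteness of $G$, and this is where I expect the main obstacle to lie. For canonical Fano threefolds I would argue by applying the construction to the algebraic universal cover $X_{\widehat{G}} \to X$ (itself a canonical Fano threefold by the above) and then showing that any further analytic cover of $X_{\widehat{G}}$ must be trivial, using the canonical structure in dimension three: for example the simple connectedness of links of isolated cDV singularities coming from Milnor, or Namikawa's smoothability of Gorenstein canonical threefold singularities, which together should pin down the topology of $X_{\widehat{G}}^{\mathrm{sm}}$ and force $\pi_{1}$ to agree with its profinite completion. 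The volume/boundedness argument alone only gives finiteness of $\widehat{G}$, and ruling out an infinite residually trivial kernel of $G \to \widehat{G}$ is where genuinely new topological input from the canonical condition is required.
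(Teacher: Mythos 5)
Your proposal correctly sets up the cover $Z \to X$ via Corollary~\ref{cor-3l} and Proposition~\ref{thm-cover}, and the volume/boundedness argument does give finiteness of the profinite completion $\widehat{G}$ (though this much was already known from \cite{Xu14}, without needing BAB). However, you stop precisely at the point where the genuine difficulty lies, and you say so yourself: the degree-bound argument only sees finite quotients of $G$, and there is no a priori reason for $G$ to be residually finite, so $|\widehat{G}| < \infty$ does not imply $|G| < \infty$. The remarks about Milnor's theorem on links of cDV singularities and Namikawa smoothability are not developed into an argument that the kernel $\ker(G \to \widehat{G})$ is trivial, and it is not clear how they could be: the possible failure of residual finiteness is a phenomenon of the global group $G$, not something visible from the local structure of the singularities of $X_{\widehat{G}}$. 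So the proposal, as written, does not prove the theorem.

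The paper's proof avoids the residual-finiteness issue altogether by working directly with the (possibly infinite-degree) analytic cover $Z$. Using Reid's Riemann--Roch with basket corrections (Theorem~\ref{orbifoldRR}) and Miyaoka-type positivity $-c_2(X)\cdot K_X > 0$ (Corollary~\ref{weakfano}, reducing the canonical case to the terminal case via a terminalization), one shows $\chi_{(2)}(Z,\mathcal{O}_Z) > 0$; Demailly's $L^2$-Kodaira vanishing (Theorem~\ref{L2vanishing}) kills $H^i_{(2)}(Z,\mathcal{O}_Z)$ for $i>0$, so there is a nonzero $L^2$-holomorphic function on $Z$; then Gromov's Poincar\'e series trick produces a nonconstant $G$-invariant holomorphic function on $Z$ if $G$ were infinite, contradicting projectivity of $X$. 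This is an analytic argument about the cover itself and never passes through finite quotients, which is exactly why it can distinguish $G$ from $\widehat{G}$. If you want to salvage your approach you would need a new input proving residual finiteness of $\pi_1(X^{\rm sm})$ for canonical Fano threefolds, which as far as is known is not easier than the finiteness statement itself.
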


We prove this by combining the calculation in \cite{Reid87} with a generalization of the argument in \cite{Takayama00}: first we show that the $L^2$-index of the structure sheaf of $Z$ is non-zero, and then we obtain a contradiction by Gromov's technique if  the covering $Z$ is of infinite degree. We note that special cases of Theorem \ref{thm-3ter} with assumptions on the Fano index or Cartier index of $X$ of Theorem \ref{thm-3ter} were previously established in \cite{Zhang95}.

Finally, we end the introduction with a remark on the history.

\begin{rem} As far as we know, there are three ways to show that a smooth Fano manifold does not have an unramified  cover with infinite degree. Chronologically, the first approach is  using Atiyah's $L^2$-index theorem (\cite{Ati76}); the second one is a combination of Yau's solution to the Calabi-Yau conjecture and Myer's Theorem (see \cite{Kob61, Yau78}); and the third one is by the rational connectedness of Fano manifolds (see \cite{Campana92, KMM92}).  The latter two approaches can be used to show that the smooth locus of a log del Pezzo surface has finite fundamental group (see \cite{FKL93} and \cite{KM99}). We do not know how to prove similar statements  along these lines for threefolds. On the other hand, although our approach of using  $L^2$-index Theorem works for terminal Fano threefolds, it is not clear to us how to use it to solve the klt surface case.
\end{rem}

\bigskip
\begin{ack} We would like to thank Jean-Pierre Demailly, Philippe Eyssidieux and Chi Li for helpful discussions. Part of this work was done during the first author's visit to Beijing International Center for Mathematical Research (BICMR). He would like to thank BICMR for the hospitality and stimulating environment. The second author is partially supported by the grant `The (Chinese) National Science Fund for Distinguished Young Scholars'.
\end{ack}
\begin{convention}\label{conv} See \cite{KM98} for the basic definition of the terminologies in birational geometry.

\end{convention}
\section{Preliminary}

\subsection{Deligne-Mumford stack}\label{ss-stack}
In this section, we collect some properties of (algebraic/analytic) Deligne-Mumford (DM) stack. If we do not specify whether it is algebraic or analytic, then the property should hold for both.

Given an analytic DM stack $\mX$, there is an analytic DM stack  $\mZ$ as its {\it universal (unramified) cover}, such that $\mZ \to \mX$ is an unramified cover, and any other unramified cover $\mY\to \mX$ admits a factorization $\mZ\to \mY \to \mX$. We can define $\pi_1(\mX)$ to be the deck transformation group of $\mZ$. If an integral DM stack has trivial isotropic group along the generic point, then we call it an {\it orbi-space}.

 Let $\mathcal{X}$ be a normal separated orbi-space, with a coarse moduli space $\pi:\mathcal{X}\to X$. Near any codimension 1 point $P_i$, it has the form
 $$(X, D_i^\frac{1}{m_i}), \mbox{\ \ where $D_i$ is the image of $P_i$ of on $X$},$$
 and the local isotropic group of $P_i$ is isomorphic to $\mathbb{Z}/m_i \mathbb{Z}$. We call $m_i$ the {\it order of root of $\mathcal{X}$ along $P_i$}.

Now let $\Delta=\sum a_i\overline{P_i}$ an effective $\mathbb{Q}$-divisor on $\mathcal{X}$.   We define
$$D=\sum_{r_i}D_i, \mbox{\ \ where } r_i=\frac{a_i+m_i-1}{m_i},$$
% $m_i$ is the order of the root of $\mX$ along the generic point $\Delta_i=\pi^{-1}(D_i) $ and $a_i$ is the coefficient of $\Delta_i$ in $\Delta$.
 We define  $(X,D)$ to be the {\it coarse moduli pair} of $(\mX,\Delta)$.

 Conversely, if $(X,D=\sum^k_{i=1} \frac{m_i-1}{m_i}D_i)$ is a simple normal crossing pair. Then there is a smooth orbi-fold  $\mathcal{X}$ which is unique up to 1-isomorphic and realizes $(X,D)$ as the coarse moduli pair of $\mathcal{X}=(\mathcal{X}, 0)$. In this case, there is a surjection on the fundamental groups
$$\pi_1(X\setminus D) \to \pi_1(\mathcal{X}),$$
and the kernels are generated by the elements
$\gamma_1^{m_1}$, ...., $\gamma_k^{m_k}$, where $\gamma_1$, ...., $\gamma_k$  are the loops around $D_1$, $D_2$,..,. $D_k$. We also denote this group by $\pi_1(X,D)$.

%We have the following well know lemma to connected the fundamental groups of $\mathcal{X}$ and $(X,D)$.
%\begin{lem} There is a surjection on the fundamental groups
%$$\pi_1(X\setminus D) \to \pi_1(\mathcal{X}),$$
%such that the kernels are generated by the elements
%$\gamma_1$, ...., $\gamma_k$ are the loops around $D_1$, $D_2$,..,. $D_k$.
%\end{lem}
%\begin{proof} There are surjections
%$$\pi_1(X\setminus D)\to \pi_1(\mathcal{X})\to \pi_1(X).$$
%The universal cover of $\mathcal{X}$ corresponding to a unbranched covering of $U$, which is a branched covering of $X$, with branch orders along $D_i$ divided by $m_i$.
%\end{proof}

 If $(Z^0, E^0)$ and $(X^0, D^0)$ are coarse moduli pairs of orbi-spaces $\mathcal{X}$ and $\mathcal{Z}$. We call $f^0\colon (Z^0,E^0)\to (X^0,D^0)$ an unramified cover  if and only if it is the induced map for a unramified coveri $\mathcal{Z}\to \mathcal{X}$.

% \subsection{Cohomology of orbifolds}\label{ss-orbifold coho}

\subsection{Stratification theory}\label{ss-ws}
In this section, we give a brief sketch of what we need from the topology of a singular analytic space. The proofs  can be found in \cite{GM88}.

Let $X$ be an analytic space which can be embedded in some smooth manifold $M$. Then there is a {\it Whitney stratification}
$$X_0=X^{\rm sm}\subset X_1\subset X_2\subset \cdots \subset X_m=X, $$
such that
\begin{enumerate}
\item For any $i$, $X_i$ is open in $X$ and $W_i=X_i\setminus X_{i-1}$ is a smooth complex manifold which is closed in $X_i$;
\item There is a tubular neighborhood $U_i$ of $W_i$ in $X_i$,  with a topological deformation retract $\pi\colon U_i\to W_i$, such that for any fiber $w\in W_i$, the fiber $\pi^{-1}(w)$ is isomorphic to the cone over ${\rm Link}(x\in N)$, where $N$ is the intersection of $W$ with a complex submanifold $T$ that is transversal with $W_i$ at $w$ and satisfy $W_i\cap T=\{w\}$.
\item If $X$ is a quasi-projective variety, then $N$ can be chosen to be the intersection of $k={\rm dim}(W_i)$ general hyperplanes with $U_i$.
\item If $(X,D)$ is a pair, we can choose a Whitney stratification which is compatible with both $X$ and $D$.
\end{enumerate}

\section{Local fundamental group}

%Since $E$ is $\mathbb{Q}$-Cartier, we know that there is a morphism from $g:Y\to \mathbb{D}$, such that  $g^{-1}(\mathbb{D}^0)\cong B^0_{\epsilon}(x).$
\subsection{Retraction to a divisor}
Consider a proper effective Cartier divisor $E$ in $X$. Fix a smooth metric, we denote by $U_{\delta}$ a closed tubular neighborhood of $E$ consisting of all points with distance less than or equal to $\epsilon$ from $E$. In this section, we study the map $\pi: U_{\delta}\to E$  constructed in \cite{Gor81} and further developed in \cite{GM83, GM88}.

Fix a Whitney stratification, we assume that the restriction of $\mathcal{O}(E)$ on each strata is isomorphic to the trivial bundle.  By \cite[II.5.A]{GM88}, we know that the homotopy class of $U_{\delta}$ does not depend on the choice of metric for $\epsilon$ sufficiently small. Furthermore, as constructed in \cite[Section 7]{Gor81}, there is a morphism
$$h\colon U_{\delta}\times [0,1] \to U_{\delta},$$
such that $h(\cdot, 0)={\rm id},$
$$\pi:=h(\cdot,1)  \colon U_{\delta} \to E,$$ and $h|_{E\times [0,1]}$ gives an isotopy of $E\to E$ between ${\rm id}$ and $ h(\cdot, 1)|_{E} $.
%We denote $h(\cdot, 1)$ by $\pi$.

Fix a stratum $A\subset E$ which is of codimension $c$ and consider the $\epsilon_0$-interior $A^{\circ}$ defined in \cite[Page 180]{GM83} for sufficiently small $\epsilon_0$.  Denote by $V_{A^{\circ}}=\pi^{-1}(A^{\circ})$  the closed tubular neighborhood of  $A^{\circ}$, $V^0_{A^{\circ}}=V_{A^{\circ}}\setminus A^{\circ}$ the punctured tubular neighborhood  and let $\partial V^0_{A^{\circ}}$ be the boundary of $V^0_{A^{\circ}}$ in  $U^0:=U_{\delta}\setminus E$. Since we assume that $\mathcal{O}(E)$ is trivial on $A$, there is a morphism
$$ \rho_A: \pi^{-1}(A)\to \mathbb{D},\mbox{ \  where }\mathbb{D}=\{t\ | \ |t|\le \epsilon\} $$
for sufficiently small $\epsilon$, such that
$$\rho_A^{-1}(0)=E \cap \pi^{-1}(A).$$
Furthermore, we assume that $\rho$ yields a topological fibration over $\mathbb{D}^0=\mathbb{D}\setminus \{0\}$.
%which can be extended over $\{t\ |\ |t|=\epsilon\}$ if we add a natural boundary to $\pi_1(A)$. The latter condition can be simply achieved if we replace $\epsilon$ by $\epsilon /2$.

Then we apply the argument in  \cite[Section 6]{GM83} (also see \cite[Part 2, 6.13]{GM88}) to conclude that after possibly choosing a smaller $\epsilon$, for any $p\in A^{\circ}$,
$$\left( L:= \pi^{-1}(p)\setminus \{p\}, \partial L \right)$$ is homotopic to a bundle over $\mathbb{D}^{\circ}$, where $\mathbb{D}^{\circ}=\{t\ | \ 0< |t|\le\epsilon\}$, with a fiber homotopic to a collared affine variety which is of complex dimension $c$.

% if we let $N=\pi^{-1}(x)$, and if we denote by $\rho^{-1}(t)=X_t$, we have $$(N\cap B_{\epsilon'}(x)\cap X_t, N\cap \partial B_{\epsilon'}(x)\cap X_t)$$ is homeomorphism to a $c$-dimension affine variety with a collared boundary.

 Let $Z\subset E$ a closed subset which is a union of strata with complex codimension $c$. Denote by $W$ the complement of $Z$ in $E$. Let $V:=\pi^{-1}(W)\subset U_{\delta}$ and $V^0=V\setminus E$.  Then $V^0$ is an open analytic subspace of $ U^0=U_{\delta}\setminus E$ and its closure is obtained by adding a collared boundary.

Using the stratified theory analogue to \cite[II.6.13]{GM88},  we can show the following technical lemma, which is our main tool to show Theorem \ref{thm-lf}.
\begin{lem}\label{l-codim2}
Assume $c\ge 2$, then
$$\pi_1(V^0)\to \pi_1( U^0) $$
is surjective.
\end{lem}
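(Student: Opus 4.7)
The strategy is to show directly that every based loop $\gamma \colon (S^1,*) \to (U^0, x_0)$ is rel-basepoint homotopic in $U^0$ to a loop contained in $V^0$, by pushing its $\pi$-image off the set $Z \subset E$ and lifting this homotopy through $\pi$. Using Section~\ref{ss-ws} together with the construction recalled at the start of this section, I fix a Whitney stratification of $U_{\delta}$ compatible with both $E$ and $Z$, so that $\pi \colon U_{\delta} \to E$ is a Thom--Mather stratified fibration. Its restriction to $U^0$ remains surjective onto $E$, because each fiber $\pi^{-1}(p)$ is a cone over the local link of $p$ meeting $E$ only at its apex, so $\pi^{-1}(p)\cap U^0$ is the nonempty open cone; moreover it is still a stratified fibration.

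Given $\gamma$, set $\alpha := \pi \circ \gamma \colon S^1 \to E$. Since $Z$ is a closed union of strata of real codimension $\geq 2c \geq 4$, stratified general position in $E$ produces a rel-basepoint homotopy $H \colon S^1 \times [0,1] \to E$ from $\alpha$ to a loop $\alpha'$ whose image lies in $W = E\setminus Z$. Lifting $H$ through the stratified fibration $\pi \colon U^0 \to E$ to a rel-basepoint homotopy $\tilde H \colon S^1 \times [0,1] \to U^0$ with $\tilde H(\cdot, 0) = \gamma$ and $\pi\circ\tilde H = H$, I obtain a loop $\gamma' := \tilde H(\cdot, 1)$ in $U^0$ such that $\pi(\gamma'(s)) = \alpha'(s) \in W$ for every $s$; equivalently $\gamma' \subset \pi^{-1}(W) \cap U^0 = V^0$. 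Since $\tilde H$ is a homotopy in $U^0$ from $\gamma$ to $\gamma'$, the class $[\gamma]\in\pi_1(U^0)$ lies in the image of $\pi_1(V^0) \to \pi_1(U^0)$, giving the claimed surjectivity.

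The main difficulty is the lifting step. The fibers of $\pi$ jump across the strata of $E$ (from the generic $S^1$-type punctured disk fiber over the smooth part of $E$ to the more complicated fibers $L$ over the deeper strata described earlier in this section), so $\pi$ is not a locally trivial fibration and classical homotopy lifting does not apply directly. The needed lifting is precisely the stratified-theory analogue of the argument in \cite{GM88} alluded to in the paragraph preceding the lemma: one first perturbs $H$ to be stratified-transverse to the stratification of $E$ -- since $H$ is $2$-dimensional, transversality forces $H^{-1}(A)$ to have codimension equal to the real codimension of each stratum $A\subset E$, and in particular $H$ can be taken to meet only strata of real codimension $\leq 2$ -- and then patches together local trivializations of $\pi$ along these strata using the Thom--Mather first isotopy lemma. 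The complementary ingredient (stratified transversality used to push $\alpha$ off $Z$) is standard for a $1$-parameter family mapping into a complex Whitney-stratified space, exactly because $Z$ has real codimension at least $4$.
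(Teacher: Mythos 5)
Your strategy has a genuine gap at the lifting step, and the proposed transversality fix does not repair it.

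First, the claim that $\pi\colon U^0\to E$ has a homotopy lifting property is not established. You correctly observe that $\pi$ is not a locally trivial fibration, and appeal to Thom--Mather. But the first isotopy lemma gives trivializations only over individual strata of $E$; it says nothing about lifting a homotopy \emph{across} a stratum boundary, which is exactly what your argument requires. That crossing is precisely where the fiber type changes (punctured disk over the smooth locus, more complicated punctured cone over deeper strata), and whether a given lift $\gamma$ of $\alpha$ can be dragged along $H$ through such a crossing, \emph{while staying inside $U^0$}, is the entire content of the lemma. Asserting it is circular.

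Second, the transversality fix is inconsistent. You want to perturb $H$ so it only meets strata of real codimension $\leq 2$. But $H(\cdot,0)=\alpha=\pi\circ\gamma$ is not at your disposal: the lift is required to start at $\gamma$, so the time-$0$ slice of $H$ is pinned down and may well hit strata of arbitrarily high codimension, including $Z$. You therefore cannot make $H$ transverse; the perturbation would have to hold $H(\cdot,0)$ fixed, which is incompatible with moving $\alpha$ off whatever strata it happens to hit.

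By contrast, the paper never tries to lift a global homotopy. It proceeds stratum by stratum, writing $E\setminus Z=E_0\subset E_1\subset\cdots\subset E_m=E$ with each step adding one stratum $A$. Van Kampen applied to the cover of $\pi^{-1}(E_{i+1})\setminus E_{i+1}$ by $\pi^{-1}(E_i)\setminus E_i$ and the punctured tube $V^0_{A^\circ}$ reduces the problem to showing that $\pi_1$ of the horizontal boundary $\mathcal{L}_2$ surjects onto $\pi_1(V^0_{A^\circ})$. The fiber-bundle exact sequences over $A^\circ$ and then over $\mathbb{D}^\circ$ reduce this further to showing $\pi_0(\partial X_{t,A^\circ})\cong\pi_0(X_{t,A^\circ})$ and $\pi_1(\partial X_{t,A^\circ})\twoheadrightarrow\pi_1(X_{t,A^\circ})$, which follows from the Lefschetz-type theorem \cite[Part II, 6.13.6]{GM88} for a collared affine analytic space of complex dimension $\geq c\geq 2$. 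This is where the hypothesis $c\geq 2$ really enters, and it is exactly the input your lifting argument would need but does not supply.
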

\begin{proof} %We prove the lemma by inductively adding a maximal strata to $W$ to obtain $W\subset W'\subset E$.
We use the notations as above. So we can write
$$ E\setminus Z: =E_0\subset E_1\subset \cdots \subset E_m=E,$$
such that both $E_i$ and $E_{i+1}$ are unions of strata and  are open in $E$. Furthermore, for any fixed $i$, we assume $E_{i+1}=E_i\cup A$ where $A$ is a stratum. In particular, ${\rm Codim}_A(E)\ge c\ge 2$.
We can write
$$ \overline{V}_{A^{\circ}} \supset \partial V_{A^{\circ}} =\mathcal{L}_1\cup \mathcal{L}_2 \cup \mathcal{L}_3,$$
where $\mathcal{L}_1=\pi^{-1}(\partial A^{\circ})$, $\mathcal{L}_2$ is the horizontal part, i.e.,  intersection $\overline{V}_{A^0}\cap {\pi^{-1}(E_i)}$, and  $\mathcal{L}_3$ is the vertical part, i.e., $\partial V_{A^{\circ}}\cap \rho_A^{-1}(\{t\ |\ |t|=\epsilon\})$.

% isomorphic to a $ \partial L$-bundle over the interior of $A^{\circ}$.  Then we know that the intersection of the closure $\pi^{-1}(E_i)\setminus E_i $ in $U^0$ and the $\pi^{-1}(A)$ is $ L_2 $.
We note that $\mathcal{L}_1$ and $\mathcal{L}_3$ are boundaries of a collared space, and by Van Kampen's theorem, to show that
 $$\pi_1(\pi^{-1}(E_i)\setminus E_i)\to\pi_1( \pi^{-1}(E_{i+1})\setminus E_{i+1})$$
  is surjective, it suffices to show that
  $$\pi_1(\mathcal{L}_2)\to \pi_1(  V^0_{A^{\circ}})$$
is surjective.

Let $L_2$ be the fiber of $\mathcal{L}_2\to A^{\circ}$. There is a commutative diagram
\[
\begin{CD}
\pi_1( L_2)@>>> \pi_1(\mathcal{L}_2) @>>> \pi_1( A^{\circ}) @>>> \{e\}\\
@VVV                              @VVV                       @|                  @.\\
\pi_1(L)@>>> \pi_1(V^0_{A^{\circ}}) @>>> \pi_1(A^{\circ}) @>>> \{e\},
\end{CD}
\]
where the rows are exact sequences.
Thus it suffices to show that
$$\pi_{1} ( L_2)\to \pi_1(L)$$
is surjective.

%We first show $$\pi_1(\partial W^0_{A^{\circ}})\to \pi_1( W^0_{A^{\circ}})$$ is surjective.
Denote by $X_{t,A^{\circ}}$ to be  $V^0_{A^{\circ}}\cap \rho_A^{-1}(t)$ for $0<|t|\le \epsilon$. We note that $L$ (resp. $L_2$) is an  $X_{t,A^{\circ}}$-bundle (resp. $\partial X_{t,A^{\circ}}$-bundle) bundle over $\mathbb{D}^{\circ}$. Thus it suffices to show that
$$\pi_{0} (\partial X_{t,A^{\circ}})\cong \pi_0(X_{t,A^{\circ}})\mbox{\ \  and \ \ }\pi_{1} (\partial X_{t,A^{\circ}}) \twoheadrightarrow \pi_1(X_{t,A^{\circ}}).$$

By the previous discussion, we know that $X_{t,A^{\circ}}$ is homotopic to a collared affine analytic space with dimension at least $c$ (cf. e.g. \cite[Part II, 6.13.5]{GM88}).
In particular, $$\pi_{i} (\partial X_{t,A^{\circ}})\to \pi_i(X_{t,A^{\circ}})$$
is an isomorphism for $i<c-1$ and is surjective for $i=c-1$ (cf. \cite[Part II, 6.13.6]{GM88}). Since we assume $c>1$, we exactly have
$$\pi_{0} (\partial X_{t,A^{\circ}})\cong \pi_0(X_{t,A^{\circ}})\mbox{\ \  and \ \ }\pi_{1} (\partial X_{t,A^{\circ}}) \twoheadrightarrow \pi_1(X_{t,A^{\circ}}).$$
%it follows that
%$$\pi_{1} (\partial X_{t,A^{\circ}})\to \pi_1(X_{t,A^{\circ}})$$
%is surjective.
 %is a cone over a space $\mathcal{L}$ (say \cite[7.1]{Gor81}) which is isomorphic to the base of the topological cone over $N'\cap \partial B_{\epsilon}(x),$
%where $N'$ is a complex manifold passing through $x$ and transverse with $A$. Furthermore, the topological type of $N'\cap \partial B_{\epsilon}(x)$ does not depend on $x$.
%By the long exact sequence of connecting relative homotopy groups and homotopy groups, it is easy to see that if we have
%$(U^0, U^0_{W'})$ is relative $(c-1)$-connected, then it suffices to show that  $(U^0_W, U^0_{W'})$ is relative $(c-1)$-connected, thus we can assume that
%E is obtained by
\end{proof}

\subsection{Koll\'ar component}
Let $ x\in X$ be a klt singularity.
It follows from \cite[Lemma 1]{Xu14} that there is a morphism $Y\to X$  extracting the Koll\'ar component $E$ as the only exceptional divisor which admits a log Fano pair structure. More precisely, there exists $f:Y\to X$, such that ${\rm Ex}(f)=E$ which is an irreducible divisor, $f(E)=x$, $(Y, E)$ is plt (with $E$ being the only log canonical center) and
$$-(K_Y+E)\sim_{\mathbb{Q},X}-aE$$ is ample over $X$ for some  rational $ a>0$. We can choose $ U_{\delta} $ and $U^0$ as in the previous section. In particular,
$$\pi_1({\rm Link}(x\in X))\cong \pi_1( U^0). $$

Write $(K_Y+E)|_E=K_{E}+\Delta_E$, where $\Delta_E=\sum\frac{r_i-1}{r_i}\Delta_i$ is the different divisor (e.g. \cite[Definition 4.2]{Kollar13}). Since $(Y, E)$ is plt, by adjunction, we know that $(E,\Delta_E)$ is klt.

Fix a Whitney stratification. Let $Z$ be the union of strata on $E$ which is of codimension at least $2$ and let  $E^0\subset E$ be the complement of $Z$. We can assume $\Delta_E|_{E^0}$ is smooth. Then ${\rm Codim}_{E}(E\setminus E^0)\ge 2$, and $(E^0, \Delta_{E^0}:=\Delta_E|_{E^0})$ is the coarse moduli pair of an orbifold $\mathcal{E}^0$ (cf. Section \ref{ss-stack}).

\begin{lem}Denote by $V^0$ to be $\pi^{-1}(E^0)\setminus E^0$ as in the previous section. After possibly shrinking $E^0$ to a big open set,  we have a differentiable fiber bundle morphism
$$0\to \mathbb{D}^{\circ}\to V^0\to \mathcal{E}^0 \to 0.$$
%where $ \mathbb{D}^{\circ}=\{x\in \mathbb{C}|\ 0<|x|<1\}, $
%and $\pi_1(U)\to \pi_1(B^0_{\epsilon}(x))$ is surjective.
\end{lem}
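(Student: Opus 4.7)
The plan is to build the bundle structure locally in suitable orbifold charts of $\mathcal{E}^0$ and glue. First I would further shrink $E^0$ (removing closed subsets of codimension $\geq 2$, so that $E^0$ remains big) to arrange that: the components $\Delta_i$ of $\Delta_E|_{E^0}$ are smooth and pairwise disjoint; a neighborhood of $E^0$ in $Y$ is smooth away from $\bigcup_i \Delta_i$; near each generic point of $\Delta_i$ the plt pair $(Y,E)$ has the standard local form $(\mathbb{C}^n,\{x_n=0\})/\mu_{r_i}$ arising from the local classification of plt pairs and the definition of the different (cf. \cite[\S 3.3, \S 4]{Kollar13}); and a suitably shrunk tubular neighborhood $U_\delta \cap \pi^{-1}(E^0)$ admits a Whitney stratification refining the one used in Section~\ref{ss-ws} and compatible with the cyclic quotient charts above.

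Next I would construct the $\mathbb{D}^\circ$-bundle piecewise on $\mathcal{E}^0$. On the open part $E^0 \setminus \Supp(\Delta_E)$, the ambient $Y$ is smooth and $E$ is a smooth Cartier divisor, so after possibly shrinking $\delta$ the retraction $\pi\colon V\to E^0 \setminus \Supp(\Delta_E)$ is diffeomorphic to the unit disk bundle of the smooth normal line bundle $\mathcal{O}_Y(E)|_{E^0\setminus \Supp(\Delta_E)}$; removing the zero section gives a $\mathbb{D}^\circ$-bundle, which matches the orbifold structure of $\mathcal{E}^0$ away from $\Delta_E$. On a neighborhood of the generic part of each $\Delta_i$, pull back to the index-$r_i$ smooth cover $\tilde U \subset \mathbb{C}^n$; there $\tilde E = \{x_n=0\}$ is a smooth Cartier divisor in a smooth complex manifold, and the $\mu_{r_i}$-equivariant tubular neighborhood $\tilde V$ is $\mu_{r_i}$-equivariantly diffeomorphic to the unit disk bundle of the normal line bundle. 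Its punctured version descends to a $\mathbb{D}^\circ$-bundle on the orbifold chart $[\tilde E / \mu_{r_i}]$, which is precisely the local picture of $\mathcal{E}^0$ near $\Delta_i$ by the very definition of the different.

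Finally I would glue these local bundles using the local triviality of $\mathcal{O}_Y(E)$ on the neighborhood of $E^0$ arranged in the first step (transition functions on overlaps are $\mathbb{C}^*$-valued and restrict to the $\mathbb{D}^\circ$-bundle maps on the unit disks), and identify the resulting global object with $V^0$ itself by invoking the uniqueness up to smooth isotopy of Goresky's tubular-neighborhood retraction \cite[II.5.A]{GM88}, applied both globally on $E^0$ and in each orbifold chart. The main obstacle I anticipate is the orbifold bookkeeping at the second step: one must verify that the plt cyclic-quotient model can be chosen compatibly with the stratification used to define $\pi$ (so that the Goresky retraction lifts to the $\mu_{r_i}$-cover as the normal-bundle projection of $\tilde E$), and that the local $\mathbb{D}^\circ$-bundles on the orbifold charts glue as bundles over the \emph{stack} $\mathcal{E}^0$ rather than merely over its coarse space $E^0$. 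Both of these follow from the freedom to refine the Whitney stratification and from the standard descent of equivariant disk bundles, but require care.
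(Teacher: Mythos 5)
Your proof takes essentially the same route as the paper's: both reduce to the standard plt local model near a codimension-one component of $\Delta_E$ (a cyclic quotient $(\mathbb{C}^2,(x=0))/(\mathbb{Z}/m\mathbb{Z})\times\mathbb{C}^{n-2}$, computed from the different), observe that the punctured neighborhood over such a chart is a $\mathbb{D}^\circ$-bundle over the corresponding root stack, and that away from $\Delta_E$ it is simply the punctured disk bundle of the normal line. You are somewhat more careful than the paper in spelling out the gluing of these local bundles into a bundle over the stack $\mathcal{E}^0$ (via uniqueness of the Goresky retraction up to isotopy), a step the paper leaves implicit, but the underlying argument is the same.
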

\begin{proof} By the formula of different divisor to compute $\Delta$, we know that along the general point $p$ of a divisor $S$ on $E$, a chart $U_p$ for an analytic local neighborhood of $p\in Y$ can be
given by $$(p\in (Y,E)) \cong \left(0\in \left(\mathbb{C}^2=(x,y), (x=0)\right)/(\mathbb{Z}/m\mathbb{Z}) \times \mathbb{C}^{n-2}\right ),$$
 and the coefficient of $S$ in $\Delta_E$ is precisely $\frac{m-1}{m}$.
 Using this coordinates to calculate, it is clear that we have
$$0\to \mathbb{D}^{\circ}\to U_p\to  \left(\mathbb{C}, \{0\}^{\frac{1}{m}}\right) \times \mathbb{C}^{n-2} \to 0,$$
where $U_p= \left (\{(x,y)\in \mathbb{C}^2|\ 0<|x| \le \epsilon \}/(\mathbb{Z}/m\mathbb{Z}) \right)\times \mathbb{C}^{n-2}$ which can be use to represent $V^0$ around $p$ and $(\mathbb{C}, \{0\}^{\frac{1}{m}})$ is the root stack with $m$-th root along $\{0\}$.
\end{proof}
%\begin{prop}Theorem \ref{thm-gf} in dimension $n-1$ implies Theorem \ref{thm-lf} in dimension $n$.
%\end{prop}
\begin{proof}[Proof of Theorem \ref{thm-lf}]If we assume Conjecture \ref{conj-gf} holds in dimension $n-1$, then we know that $\pi_1(\mathcal{E}^0)$ is finite. The same argument as in the last paragraph of the proof of \cite[Theorem 1]{Xu14} implies that $\pi_1(V^0)$ is finite. Since we have $\pi_1(V^0)$ surjects to $\pi_1(U^0)$ by applying Lemma \ref{l-codim2} to a multiple of $E$ which is Cartier, we conclude that  $\pi_1(U^0)$ is finite.
\end{proof}
\begin{proof}[Proof of Theorem \ref{cor-3l}]Since Conjecture \ref{conj-gf} holds for surfaces (cf. \cite{FKL93}), Theorem \ref{cor-3l} is implied by Theorem \ref{thm-lf}.
\end{proof}

%\subsection{Smooth locus}
%In this section, we aim to prove Theorem \ref{thm-smth} for dimension $n$, under the assumption that Theorem \ref{thm-lf} holds for dimension $n$ or less.
Using the standard Whitney stratification theory, we can show that the finiteness of the smooth locus of the local fundamental group indeed implies the finiteness of the fundamental group of the smooth locus of a germ.
\begin{lem}\label{l-sl}
Assuming Conjecture \ref{conj-lf} holds for dimensional less or equal to $n$. Let $x\in X$ be an algebraic klt singularity. Then if we choose a small Euclidean neighborhood $U$ of $x$ in $X$, then $\pi_1(U^{\rm sm})$ is finite.
\end{lem}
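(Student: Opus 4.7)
The plan is to induct on $n=\dim X$. The base case $n\le 1$ is trivial since a klt singularity of dimension at most $1$ is smooth. Assume the statement for all klt germs of dimension $<n$, and let $(X,x)$ be a klt germ of dimension $n$.

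I would first shrink $U$ so that it is homeomorphic to the open cone on $L:=\Link(x\in X)$; then $U^{\rm sm}$ deformation retracts onto $L^{\rm sm}:=L\setminus (L\cap X^{\rm sing})$, so it suffices to prove $\pi_1(L^{\rm sm})$ is finite. By Conjecture \ref{conj-lf} applied to $(X,x)$, the link group $\pi_1(L)$ is already finite. Next, fix a Whitney stratification of $X$ compatible with $X^{\rm sing}$; each stratum of $L\cap X^{\rm sing}$ corresponds to a positive-dimensional stratum $B$ of $X^{\rm sing}$, and at a general point $p\in B$ the transverse slice $T_B$ is a klt germ of dimension strictly less than $n$. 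By the induction hypothesis, $\pi_1(T_B^{\rm sm})\cong\pi_1(\Link(p\in T_B)^{\rm sm})$ is finite, and by Conjecture \ref{conj-lf} applied to $(T_B,p)$ so is $\pi_1(\Link(p\in T_B))$.

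I would then run the stratified Van Kampen argument in the same spirit as the proof of Lemma \ref{l-codim2}. Listing the strata of $L\cap X^{\rm sing}$ in increasing order of dimension as $A_1,\dots,A_m$ and setting $L^{(i)}:=L\setminus (A_1\cup\cdots\cup A_i)$, each $A_i$ has real codimension at least $4$ in $L^{(i-1)}$ and admits a tubular neighborhood fibered over $A_i$ with fiber the stratified link $\Link(p\in T_{B_i})$. Van Kampen applied to $L^{(i-1)}=L^{(i)}\cup N$ with overlap $N\setminus A_i$ then yields a surjection $\pi_1(L^{(i)})\twoheadrightarrow\pi_1(L^{(i-1)})$ whose kernel is the normal closure in $\pi_1(L^{(i)})$ of (a quotient of) the finite group $\pi_1(\Link(p\in T_{B_i}))$. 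Starting from the finite group $\pi_1(L^{(0)})=\pi_1(L)$ and inducting on $i$, this chain of finite-kernel surjections would give the finiteness of $\pi_1(L^{(m)})=\pi_1(L^{\rm sm})$.

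The main obstacle is checking that at each step the kernel is actually finite, and not merely the normal closure of finite-order elements (which in general can be infinite). This is precisely where the induction hypothesis on $\dim X$ is needed: the meridian subgroup factors through the image of the finite group $\pi_1(\Link(p\in T_{B_i})^{\rm sm})$ coming from the smooth part of the transverse slice, and the local Whitney product structure near $A_i$ should organize the conjugates of the meridians into this single finite subgroup, forcing the normal closure to remain finite. Iterating over $i=1,\dots,m$ then yields the desired finiteness of $\pi_1(L^{\rm sm})$, and hence of $\pi_1(U^{\rm sm})$.
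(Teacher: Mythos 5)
Your approach closely mirrors the paper's own proof. The paper likewise fixes a Whitney stratification $X_0=X^{\rm sm}\subset X_1\subset\cdots\subset X_m=X$, argues by a stratified Van Kampen induction that $\pi_1(X_{i-1}\cap U)\to\pi_1(X_i\cap U)$ has finite kernel, and bounds that kernel by the link group $\pi_1(N^0_w)$ of a transverse klt slice $N_w$, which is finite by Conjecture~\ref{conj-lf}. The only cosmetic differences are that you pass to the link $L$ and invoke Conjecture~\ref{conj-lf} for $(X,x)$ separately as a base case, whereas the paper runs the descending induction inside $U$ (starting from the trivial group $\pi_1(U)$) and absorbs the full-dimensional application of Conjecture~\ref{conj-lf} into the deepest stratum $W_m=\{x\}$, where $N_w$ is the whole germ.

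The concern you raise at the end is the right thing to worry about, and it is where your write-up has a genuine gap. Van Kampen identifies the kernel of $\pi_1(L^{(i)})\twoheadrightarrow\pi_1(L^{(i-1)})$ (or, in the paper's variables, of $\pi_1(X_{i-1}\cap U)\to\pi_1(X_i\cap U)$) only with the \emph{normal closure} in the ambient group of the image of the finite meridian group, and normal closures of finite subgroups can be infinite --- $\mathbb{Z}/2*\mathbb{Z}/2$ already shows this. Your proposed remedy does not close this: the conjugating elements range over all of $\pi_1(L^{(i)})$, not only over loops supported near the stratum $A_i$, so the local Whitney product structure cannot by itself force every conjugate of a meridian into the single finite image of $\pi_1(\Link(p\in T_{B_i})^{\rm sm})$. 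For what it is worth, the paper's own proof treats this step very tersely --- it simply asserts that by Van Kampen it suffices to show $\rho\colon\pi_1(U_i^0)\to\pi_1(W_i)$ has finite kernel, without discussing the normal-closure point --- so you have correctly located the delicate step; but as written your argument, like the paper's sketch, does not supply the justification for why the normal closure of the finite meridian subgroup is actually finite.
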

\begin{proof}Consider a Whitney stratification of a neighborhood of $x$ given as in Section \ref{ss-ws}.
Let
$$X_0=X^{\rm sm}\subset X_1\subset X_2\subset \cdots \subset X_m=X, $$
such that $X_i$ is an open set of $X$ and $W_i=X_i\setminus X_{i-1}$ is a strata. We will show that the natural morphism
$$\pi_1(X_{i-1}\cap U)\to \pi_1(X_i\cap U)$$
has finite kernel.  Let $U_i$ be a tubular neighborhood of $W_i$ as in Section \ref{ss-ws}.
Applying Van Kampen theorem, we know that it is enough to show that
$$\rho\colon \pi_1(U^0_i) \to \pi_1(U_i)=\pi_1(W_i)$$
has finite kernel.

However, there is a surjection from
$\pi_1(N^0_w=N_w\setminus \{w\})$ to the kernel ${\rm Ker}(\rho)$ for any $w\in W_i$, where $N_w$ is given in (2) in Section \ref{ss-ws}. Because $N$ can be chosen to be a general intersection of $k=\dim(W_i)$ hyperplanes, $(w\in N)$ is an algebraic klt singularity. Then it follows from our assumption that Theorem \ref{thm-lf} holds for dimension less or equal to $n$. Therefore, we know that  $\pi_1(N^0_w)$ is finite.
\end{proof}

In the latter application, we need the following variant.
\begin{lem}\label{l-sm}
Assume Conjecture \ref{conj-gf} holds for dimension at most $n-1$. Let $(x\in (X,D))$ be an $n$-dimensional klt singularity in a pair with $D=\sum \frac{n_i-1}{n_i}D_i$.  Assume each $D_i$ is $\mathbb{Q}$-factorial.  Denote by $X^0$ to be a large open set of $X$, such that $(X^0, {\rm Supp}( D)|_{X^0})$ is simple normal crossing. Then $\pi_1(X^0, D^0=D|_{X^0})$ is finite.
\end{lem}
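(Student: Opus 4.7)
The plan is to follow the strategy of Theorem~\ref{thm-lf}, enriched to track the orbifold structure along $D$. First I would extract a Kollár component for the \emph{pair} $(X,D)$: by the pair analogue of \cite[Lemma 1]{Xu14} (the $\mathbb{Q}$-factoriality of each $D_i$ is used here to run the relevant MMP), there exists $f\colon Y\to X$ with a unique exceptional prime divisor $E$ such that $(Y,E+\tilde D)$ is plt, $E$ is its only log canonical center, and $-(K_Y+E+\tilde D)\sim_{\mathbb{Q},X}-aE$ is $f$-ample for some $a>0$, where $\tilde D = f_*^{-1}D$. Adjunction then gives a klt log Fano pair $(E,\Gamma_E)$ of dimension $n-1$, where $\Gamma_E=\Delta_E+\tilde D|_E$. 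Since the coefficients of $D$ have the standard form $\frac{n_i-1}{n_i}$, the different formula for plt pairs (cf. \cite[\S4]{Kollar13}) yields that the coefficients of $\Gamma_E$ are also standard, of the form $\frac{m-1}{m}$. Applying the hypothesis that Conjecture~\ref{conj-gf} holds in dimension $n-1$ to $(E,\Gamma_E)$, we conclude that $\pi_1(\mathcal{E}^0)$ is finite, where $\mathcal{E}^0$ is the orbifold coarse modulus of a big open $E^0\subset E$ on which $(E^0,\mathrm{Supp}(\Gamma_E)|_{E^0})$ is SNC.

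Next I would transport this finiteness to $X^0$ via the punctured neighborhood of $E$. Choose $U_\delta$ as in Section~3.1 together with a Whitney stratification compatible with $E$, $\tilde D$, and their intersections as in Section~\ref{ss-ws}(4). After shrinking $X$ to a small analytic neighborhood of $x$, the punctured pair $(U^0=U_\delta\setminus E,\tilde D|_{U^0})$ is biholomorphic to $(X\setminus\{x\},D)$, so $\pi_1(X^0,D^0)\cong \pi_1(U^0,\tilde D|_{U^0})$ as orbifold fundamental groups. Applying the orbifold version of Lemma~\ref{l-codim2} to a Cartier multiple of $E$ produces a surjection $\pi_1(\mathcal{V}^0)\twoheadrightarrow \pi_1(X^0,D^0)$, where $\mathcal{V}^0=\pi^{-1}(E^0)\setminus E^0$ carries the orbifold structure pulled back from $\tilde D$. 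The orbifold analogue of the fibration lemma of Section~3.2 then exhibits $\mathcal{V}^0\to \mathcal{E}^0$ as a $\mathbb{D}^{\circ}$-bundle, so $\pi_1(\mathcal{V}^0)$ is an extension of the finite group $\pi_1(\mathcal{E}^0)$ by a cyclic quotient of $\pi_1(\mathbb{D}^{\circ})=\mathbb{Z}$. The final paragraph of the proof of \cite[Theorem 1]{Xu14}, which exploits that $-aE$ is $f$-ample, then shows this cyclic quotient is finite, so $\pi_1(\mathcal{V}^0)$ and hence $\pi_1(X^0,D^0)$ is finite.

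The main obstacle I anticipate is verifying the orbifold analogues of Lemma~\ref{l-codim2} and the fibration lemma of Section~3.2 in the presence of the divisor $\tilde D$. Both arguments are essentially stratification-theoretic, so the compatibility of the Whitney stratification with $\tilde D$ reduces them to calculations on individual strata, where each stratum either meets $\tilde D$ transversally or is contained in it, and in both cases the orbifold structure pulls back in a controlled way along the transverse links. A secondary concern is the pair version of the Kollár component construction, but this is by now standard in view of \cite{BCHM10} together with the $\mathbb{Q}$-factoriality hypothesis on each $D_i$.
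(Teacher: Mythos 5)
Your proof takes a genuinely different route from the paper's.  The paper disposes of Lemma~\ref{l-sm} in two lines: by \cite[Lemma 6.3]{MP04}, since each $D_i$ is $\mathbb{Q}$-Cartier (this is where the $\mathbb{Q}$-factoriality hypothesis is used), one can take a finite cover $f\colon Y\to X$ with $f^*(K_X+D)=K_Y$; then $Y$ is an algebraic klt singularity of the same dimension $n$, the induced map $\pi_1(Y^{\rm sm})\to\pi_1(X^0,D^0)$ has finite cokernel, and the finiteness of $\pi_1(Y^{\rm sm})$ is exactly the content of Lemma~\ref{l-sl} (via Theorem~\ref{thm-lf}).  In other words, the paper \emph{reduces} the orbifold statement to the already-proved boundary-free statement by a cyclic-cover trick, and all the hard topology has been done once and for all in Section~3.

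You instead propose to re-run the entire Kollár-component/stratification argument of Theorem~\ref{thm-lf} in the orbifold category, tracking $D$ throughout.  That is plausible in outline, but it commits you to establishing orbifold analogues of Lemma~\ref{l-codim2} and of the $\mathbb{D}^\circ$-bundle lemma --- work that the paper never does, and that is not as routine as ``reduce to calculations on individual strata,'' since the behavior of the orbifold structure along the vertical/horizontal pieces $\mathcal{L}_1,\mathcal{L}_2,\mathcal{L}_3$ in the Van Kampen decomposition has to be checked, and the different formula only controls the coefficients generically along each stratum.  You also misattribute the role of the $\mathbb{Q}$-factoriality of the $D_i$: you invoke it ``to run the relevant MMP,'' but MMP for the pair needs $\mathbb{Q}$-factoriality of $X$, not of the individual $D_i$; in the paper the hypothesis is there precisely so that the index-one cover of \cite[Lemma 6.3]{MP04} exists.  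So the paper's route is not only shorter but also uses the hypothesis at the correct spot; you should either adopt that reduction, or fill in the orbifold stratification lemmas in detail.
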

\begin{proof}
Since $x\in (X,\Delta)$ is local, we can choose a ramified cover such that there is an algebraic singularity $y\in Y$, with a finite morphism $f\colon Y\to X$ such that $f^*(K_X+\Delta)=K_Y$ (see \cite[Lemma 6.3]{MP04}).
Then it is clear $\pi_1(Y^{\rm sm})\to \pi_1(X^0, \Delta^0)$ has a finite cokernel. So this follows from Theorem \ref{thm-lf} and Lemma \ref{l-sl}.
\end{proof}

\subsection{Construction of covering}\label{s-covering}

In this section, we aim to prove the following statement.

\begin{prop}\label{thm-cover}
Let $(X,D)$ be a $n$-dimensional klt quasi-projective pair, such that $D=\sum_{i}\frac{n_i-1}{n_i}D_i$.Assume each $D_i$ is $\mathbb{Q}$-Cartier.  Let $X^0\subset X$ be a smooth open set such that ${\rm Codim}_X(X\setminus {X}^0)\ge 2$ and $D^0:=D|_{X^0}$ are smooth. Consider the pair $(X^0, D^0)$ as an orbifold and let $\phi^0\colon (Z^0,E^0) \to (X^0,D^0)$ be an unramified Galois cover of analytic spaces with Galois group $G$ (as a quotient of the orbifold fundamental group).

Assume that Conjecture  \ref{conj-gf} holds for dimension at most $n-1$. Then there is a normal analytic space $Z$ with a properly discontinuous action by $G$, such that
 ${\rm Codim}_{Z}(Z\setminus Z^0)\ge 2$ and and if we  denote by $E$ the extension of $E^0$,
%\item  the quotient stack $[Z/ G]$ is the analytification of a proper orbi-space algebraic $\mathcal{X}$; Furthermore,
the quotient pair of $(Z,E)$ by $G$ is  $(X,D)$.
\end{prop}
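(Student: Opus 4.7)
The plan is to construct $Z$ by extending $\phi^0$ locally across $X\setminus X^0$ and across $D$ in small Euclidean charts, and then to glue the local extensions using the uniqueness of normalization. Fix $x\in X$ and a small Euclidean neighborhood $U=U_x$; write $U^0=U\cap X^0$ and $D^0_U=D|_{U^0}$. By Lemma~\ref{l-sm}, which applies under our standing assumption that Conjecture~\ref{conj-gf} holds in dimension at most $n-1$, the orbifold fundamental group $\pi_1(U^0,D^0_U)$ is finite. Hence the classifying surjection $\pi_1(X^0,D^0)\twoheadrightarrow G$ restricts on $\pi_1(U^0,D^0_U)$ to a homomorphism with finite image $H_U\le G$, and the pullback $(\phi^0)^{-1}(U^0)\to U^0$ splits as a disjoint union indexed by cosets of $H_U$ in $G$ of connected finite orbifold Galois covers, each a translate of a single cover $(\widetilde U^0,\widetilde E^0)\to(U^0,D^0_U)$ with Galois group $H_U$.

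The next step is to extend $\widetilde U^0$ to a finite morphism $\widetilde U\to U$ of normal analytic spaces. Over $U\setminus D|_U$ the cover is honest finite \'etale, and I define $\widetilde U$ as the normalization of $U$ in the (finite) function field of $\widetilde U^0$. Since $U\setminus U^0$ has codimension at least $2$ in $U$ and the branching of $\widetilde U^0\to U^0$ along $D^0_U$ has indices dividing the $n_i$, Riemann extension together with the finiteness of the field extension imply that $\widetilde U\to U$ is a finite morphism of normal analytic spaces whose restriction to $U^0$ recovers $\widetilde U^0$. I then set $\phi^{-1}(U):=\bigsqcup_{gH_U\in G/H_U}g\cdot\widetilde U$.

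Gluing is automatic by uniqueness of normalization: for two such neighborhoods $U,U'$, both $\phi^{-1}(U)|_{U\cap U'}$ and $\phi^{-1}(U')|_{U\cap U'}$ are normalizations of $U\cap U'$ in the same finite extension determined by $\phi^0$ over $(U\cap U')^0$, hence canonically isomorphic. This yields a normal analytic space $Z$ with a finite morphism $\phi\colon Z\to X$ extending $\phi^0$, and ${\rm Codim}_Z(Z\setminus Z^0)\geq 2$ because $\phi$ is finite and $X\setminus X^0$ has codimension at least $2$. The $G$-action on $Z^0$ extends uniquely to $Z$, since on each $\widetilde U$ translation by $g\in G$ is determined by its restriction to the dense open subset $\widetilde U^0$; it is properly discontinuous because above each compact $K\subset U$ only finitely many $g\in G$ can carry points back into $\widetilde U$, as $\phi$ is finite. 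Finally the extension $E$ of $E^0$ is the reduced preimage of $D$, with ramification index exactly $n_i$ along $D_i$ since $\phi^0$ was unramified as an orbifold cover, so the quotient pair is precisely $(X,D)$.

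The main obstacle is the extension step: one must verify that each local orbifold cover $\widetilde U^0$ extends to a finite morphism $\widetilde U\to U$ of normal analytic spaces across both the codimension-$\geq 2$ subset $U\setminus U^0$ and the branch divisor $D|_U$. The $\mathbb{Q}$-Cartier hypothesis on the $D_i$ makes the ramified part locally available as a cyclic cover, and the unramified extension across codimension-$\geq 2$ loci is standard Riemann extension; the key ingredient that reduces everything to finite data is the finiteness of $\pi_1(U^0,D^0_U)$ provided by Lemma~\ref{l-sm}.
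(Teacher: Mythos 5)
Your proposal is correct and takes a genuinely different route from the paper. The paper's proof builds $Z$ inductively over a Whitney stratification $X^0=X_0\subset X_1\subset\cdots\subset X_m=X$: at each step it covers the new stratum $W_i$ by simply connected Euclidean sets $B_k$, extends the cover over the punctured tubular neighborhood $U^0_{i-1,k}$, and obtains finiteness of $\pi_1(U^0_{i-1,k},\Gamma^0_{i-1,k})$ by surjecting from the fundamental group of a punctured transversal slice $N$ of dimension $n-\dim W_i$, to which Lemma~\ref{l-sm} is applied. You instead extend directly around each point $x\in X$ in a small Euclidean ball $U_x$, apply Lemma~\ref{l-sm} to the full $n$-dimensional germ $(x\in(X,D))$ to see that $\pi_1(U_x^0,D^0_{U_x})$ is finite, decompose $(\phi^0)^{-1}(U_x^0)$ into $G/H_{U_x}$ translates of a finite Galois cover, extend each by the Grauert--Remmert-type finite-cover extension across the codimension-$\geq 2$ locus and the $\mathbb{Q}$-Cartier branch divisor, and glue by uniqueness of the normal extension. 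Both proofs rest on the same two ingredients (local finiteness from Lemma~\ref{l-sm} plus existence and uniqueness of the finite normal extension), so the logical strength is identical; your version dispenses with the stratification bookkeeping at the cost of slightly heavier reliance on the uniqueness statement to enforce the cocycle condition in the gluing, which you should make explicit (it holds because all the transition isomorphisms are the unique extensions of the identity on the common restriction to $(\phi^0)^{-1}$ of the triple overlap).

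Two small imprecisions worth fixing: the map $\phi\colon Z\to X$ is only \emph{locally} finite, not finite, once $G$ is infinite (the claim ${\rm Codim}_Z(Z\setminus Z^0)\geq 2$ should be argued componentwise over each $\widetilde U_x$, where $\phi$ is indeed finite); and ``normalization of $U$ in the function field'' should be replaced by the analytic extension theorem for finite covers of normal complex spaces (Grauert--Remmert/Dethloff--Grauert), since $U$ is an analytic germ with no function field in the algebraic sense. Also, the ramification of $Z\to X$ along $D_i$ is some $m_i$ dividing $n_i$ (not necessarily $n_i$ itself); the equality of the quotient pair with $(X,D)$ is nonetheless correct because, as the paper observes, it only needs to be checked over $X^0$, where it holds by the definition of the orbifold cover.
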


\begin{proof}
We denote the branch order of $Z^0\to X^0$ by $m_i$ along the component $D^0_i$ of $D^0$. We can then assume that $Z^0$ is given by the underlying space of the universal cover for the orbifold $(X^0, \tilde{D}^0= \sum (D^0_i)^{\frac{1}{m_i}})$ and write
$$(\phi^0)^*(K_{X^0}+D^0)=K_{Z^0}+E^0.$$
In fact, if we can extend the universal cover $(Z^0,E^0)$ to $(Z,E)$, then we can extend the intermediate covers by simply taking the quotient of $(Z,E)$.

Now fix a Whitney stratification of $(X,D)$, and assume
$$X_0=X^0\subset X_1\subset X_2\subset \cdots \subset X_m=X, $$
such that $X_i$ is an open set of $X$ and $W_i=X_i\setminus X_{i-1}$ is a strata. We assume that we have constructed $Z_{i-1}$ with a $G$-action admitted a morphism $\phi_{i-1}: Z_{i-1}\to X_{i-1} $, such that
$(Z_{i-1}, E_{i-1})/G$ has $(X_{i-1}, D|_{X_{i-1}})$ as its coarse moduli space.

We  take a covering of $W_i$ by simply connected Euclidean open neighborhoods $\{B_i\}$. We know that each component $V^0_{i-1,k}$ of $\phi^{-1}_{i-1}(U^0_{i-1,k})$ for  $U^0_{i-1,k}:=\pi^{-1}_i(B_k)\setminus B_k\subset X_{i-1}$  corresponds to the morphism
$$\pi_1(U^0_{i-1,k},\Gamma_{i-1,k}^0) \twoheadrightarrow G^* \hookrightarrow \pi_1(X^0,\tilde{D}^0)=G,$$
where $\Gamma_{i-1,k}^0=E|_{U^0_{i-1,k}}$.

By our assumption that Conjecture \ref{conj-lf} holds for dimension at most $n$, we claim that $\pi_1(U^0_{i-1,k},\Gamma_{i-1,k}^0)$ is finite. In fact, since $B_k$ is simply connected, there is surjection
$$\pi_1(N^0,\Theta^0)\to \pi_1(U^0_{i-1,k},\Gamma_{i-1,k}^0) ,$$
where $(N,\Theta)$ is the neighborhood of $x$ in the intersection of $k=\dim B_k$ hyperplanes with $(T^{-1}_i(B_k), D|_{T^{-1}_i(B_k)})$ as in Section \ref{ss-ws} and
$$(N^0,\Theta^0)=(N,\Theta)\cap Z^0.$$ Thus $x\in (N,\Theta)$ is again a klt algebraic singularity of dimension $(n-\dim W_i)$. So the claim follows from the conjectural assumption and Lemma \ref{l-sm}.

Therefore, $G^*$ is finite, which in turn implies that the morphism $V^0_{i-1,k}\to U^0_{i-1,k}$ is finite. Thus, there is a unique normal analytic space $V_{i-1,k}$ containing $V^0_{i-1,k}$ as a Zariski open set, such that $V_{i-1,k}\to U_{i-1,k}$ is a finite morphism and extends $V^0_{i-1,k}\to U^0_{i-1,k}$.

For a fixed $k$, we can glue $ Z_{i-1}$ with $V_{i-1,k}$ along $V^0_{i-1,k}$ for all components $V^0_{i-1,k}$. The resulting complex analytic space clearly admits a $G$ action. And then we glue for all $k$ to get $Z_{i}$.

By our construction, the analytic orbi-space $[Z_i/G]$ has a proper birational morphism to $ X_i$. In fact, we just glue together the open set of $[Z_{i-1}/G]$ which is proper over $X_{i-1}$ and $[V_{i-1,k}/G^*]$ which is proper over $U_{i-1,k}$.

If we denote by $(\mathcal{X}_i,\Delta_i)=([Z_i/G],[E_i/G])$, then $(X_i,D_i)$ is the coarse moduli pair of $(\mathcal{X}_i,\Delta_i)$, since $X_i$ is the coarse moduli space of $\mathcal{X}_i$. It is clear from our construction that ${\rm Codim}_Z(Z\setminus Z^0)\ge 2$, thus the equality on the branched divisor only needs to be verified  on $Z^0\to X^0 $, which is true by definition of $Z^0$.
\end{proof}
\section{Finiteness of $\pi_1(X^{\rm sm})$ for canonical Fano threefold $X$}

\subsection{$L^2$-index theorem and vanishing results}
In this section we review the $L^2$-index theorem (\cite{Ati76}, \cite{CD01}, \cite{Eyssidieux}) and some vanishing results of the $L^2$-cohomology (\cite{Demailly82}, \cite{Demailly92}).

 Given a complex analytic space $\tilde{X}$ together with a properly discontinuous action of a countable discrete group $\Gamma$ such that the quotient is compact, let $\tilde{\mathcal{F}}$ be an analytic coherent sheaf  on $\tilde{\mathcal{X}}$ admitting an action of the group $\Gamma$.
 One can define the $L^2$-cohomology group $H^i_{(2)}(\tilde{X}, \tilde{\mathcal{F}})$.
 If $\tilde{\mathcal{F}}$ is locally free, then the group can be computed using the $L^2$-Dolbeault complex of differential forms with values in $\tilde{\mathcal{F}}$.
The $L^2$-cohomology groups satisfy some usual properties of quasi-coherent sheaves. For example, a short exact sequence of sheaves induces a long exact sequence of $L^2$-cohomology groups. If there is a $\Gamma$-equivariant proper morphism between two spaces $p: X \to Y$, there is a spectral sequence with $E_2$ terms $H^i_{(2)}(Y, Rp^j_*(\tilde{F}))$ abutting to $H^{i+j}_{(2)}(X, \tilde{F})$.

 These $L^2$-cohomology groups are usually infinite dimensional.
 However, as is shown in \cite{Ati76}, \cite{CD01}, \cite{Eyssidieux}, one can assoicate a real number $h^i_\Gamma(X, \tilde{\mathcal{F}})$ as a ``renormalized" dimension.
 We define the $L^2$-index of $\tilde{\mathcal{F}}$ as
 $$
 \chi_{(2)}(\tilde{X}, \tilde{\mathcal{F}})=\sum_i (-1)^i h_{\Gamma}^i(\tilde{X}, \tilde{\mathcal{F}}).
 $$

 Denote by  $X$ the quotient of the action. When the group action is free, specifying the sheaf $\tilde{\mathcal{F}}$ and the action of $\Gamma$ on $\tilde{F}$ is equivalent to giving a coherent sheaf $\mathcal{F}$ on $X$.
 If furthermore $X$ is projective or compact K\"ahler, then $$\chi_{(2)}(\tilde{X}, \tilde{\mathcal{F}})=\chi(X, \mathcal{F}).$$
 In particular if the group $\Gamma$ is a finite group, then  $$\chi_{(2)}(\tilde{X}, \tilde{\mathcal{F}})=\chi(\tilde{X}, \tilde{\mathcal{F}})/\vert \Gamma \vert.$$
 If the group action is not free, one can take the analytic orbispace quotient $\mathcal{X}$ and the coherent sheaf $\mathcal{F}$ on $\mathcal{X}$.
 In general the $L^2$-index is not the same as the Euler characteristic of the sheaf $\mathcal{F}$ and there are contributions coming from fixed points/orbifold points.
 In the following we give an explicit $L^2$-index formula in the presence of fixed points in a special case.

 \begin{thm}\label{orbifoldRR}
 Let $Z$ be a three dimensional complex analytic space and $\Gamma$ a countable discrete group acting on $Z$.
 Assume that the action is properly discontinuous and has only isolated fixed points.
 Denote by $\mathcal{X}$ the DM stack $[Z/\Gamma]$ and $X$ its coarse moduli space.
 Assume that $X$ is projective with terminal singularities. Then we have the following formula to calculate the $L^2$-index on $Z$:
 $$\chi_{(2)}(Z,\mathcal{O}_Z)=-c_2(X)\cdot K_X+\sum\frac{1}{24|I_i|}\sum_j (r_{ij}-\frac1{r_{ij}})\geq -c_2(X)\cdot K_X,$$
 where the sum is taken over all the singularities $P_i$ on $\mathcal{X}$, $I_i$ is the inertial group, $r_{ij}$ is the index of a singularity in the basket, and the second sum is took over all singularities in the associated basket of $P_i$.
 \end{thm}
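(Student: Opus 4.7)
The plan is to combine Atiyah's $L^2$-index theorem, in the form adapted to properly discontinuous actions on analytic spaces, with Reid's Riemann--Roch formula for terminal threefolds. First I would use the $L^2$-index theorem of \cite{Ati76, CD01, Eyssidieux} to identify
\[
\chi_{(2)}(Z,\mathcal{O}_Z) \;=\; \chi(\mathcal{X},\mathcal{O}_{\mathcal{X}}),
\]
where the right-hand side is the stacky holomorphic Euler characteristic of the structure sheaf of the compact DM stack $\mathcal{X}=[Z/\Gamma]$. The content here is that the renormalized von Neumann dimensions of the $L^2$-Dolbeault cohomology on $Z$ reassemble into the ordinary Euler characteristic on the compact quotient stack, with the renormalization producing exactly a weight $\tfrac{1}{|I_i|}$ of the local heat-kernel trace at each fixed point.

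Second, I would apply Reid's Riemann--Roch formula \cite{Reid87} to $\mathcal{X}$. For a projective terminal threefold $X$, Reid's formula expresses $\chi(X,\mathcal{O}_X)$ as $-\tfrac{1}{24}K_X\cdot c_2(X)$ plus a sum over the terminal singularities of $X$ of basket contributions of the form $\tfrac{1}{24}\sum_j(r_j-\tfrac{1}{r_j})$. Upgrading this to the orbifold setting: at a point of $\mathcal{X}$ with nontrivial isotropy $I_i$, the orbifold measure introduces the extra factor $\tfrac{1}{|I_i|}$ in front of the basket of that stratum, while the global Chern number $K_X\cdot c_2(X)$ computed on the coarse space is unchanged (since $X$ is the coarse moduli and away from the isolated orbifold points nothing is modified). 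The local verification that the orbifold basket at $P_i$ is precisely $\tfrac{1}{|I_i|}$ times the ordinary basket is a finite cyclic-quotient computation, since the terminal cyclic quotients in dimension three are of type $\tfrac{1}{r}(1,-1,a)$.

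Summing the local contributions over the (finitely many) singular/orbifold points $P_i$ of $\mathcal{X}$ then yields the displayed formula. The inequality is immediate: for every $r_{ij}\geq 1$ one has $r_{ij}-\tfrac{1}{r_{ij}}\geq 0$, so the orbifold correction is nonnegative, and the bound $\chi_{(2)}(Z,\mathcal{O}_Z)\geq -c_2(X)\cdot K_X$ follows term by term.

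The main obstacle is the first step. One has to verify that Atiyah's index argument applies in the present, somewhat weak, analytic context: $Z$ is merely an analytic space (possibly with terminal singularities inherited from $X$), the action of $\Gamma$ is properly discontinuous but not free, and the sheaf of interest is $\mathcal{O}_Z$ rather than a vector bundle. Concretely, one needs a $\Gamma$-equivariant resolution or an equivariant Dolbeault complex on $Z$ to which the $\Gamma$-trace heat-kernel machinery applies, and one needs to check that the contributions of the fixed strata combine into the stacky Todd density on $\mathcal{X}$ with the weight $\tfrac{1}{|I_i|}$. Once this analytic input is in place, the rest of the argument reduces to Reid's basket computation on $\mathcal{X}$ and is essentially bookkeeping.
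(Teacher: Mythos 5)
Your high-level plan (Atiyah's $L^2$-index theorem plus Reid's plurigenus/Riemann--Roch formula) is the right strategy and agrees with the paper. But the first step, as you state it, is not correct, and the second step is much more delicate than your sketch admits.

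The identity $\chi_{(2)}(Z,\mathcal{O}_Z)=\chi(\mathcal{X},\mathcal{O}_{\mathcal{X}})$ is false. What Atiyah-type index theorems give you (see Lemma~\ref{L2-index}) is $\chi_{(2)}(Z,\mathcal{O}_Z)=\int_{[Z/\Gamma]}Td([Z/\Gamma])$, i.e.\ the integral of the Todd \emph{density} over the quotient orbispace. By Kawasaki--Riemann--Roch this integral differs from the stacky Euler characteristic $\chi(\mathcal{X},\mathcal{O}_{\mathcal{X}})=\sum(-1)^i\dim H^i(\mathcal{X},\mathcal{O}_{\mathcal{X}})$ precisely by twisted-sector contributions at the orbifold points --- which are exactly the terms your theorem is supposed to be computing. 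You can see the failure already when $\Gamma$ is finite: then $\chi_{(2)}(Z,\mathcal{O}_Z)=\chi(Z,\mathcal{O}_Z)/|\Gamma|$ (Corollary~\ref{finite-index}), whereas $\chi(\mathcal{X},\mathcal{O}_{\mathcal{X}})=\chi(X,\mathcal{O}_X)$ since $\mathcal{O}$ pushes forward without higher direct images to the coarse space; these two numbers differ in general, and the difference is what one needs to compute. A further gap you flag but do not close is that $Z$ has terminal (hence non-smooth) singularities, so the heat-kernel argument does not apply directly on $Z$; the paper passes to a $\Gamma$-equivariant resolution $Z'\to Z$ (via a stacky resolution $\mathcal{X}'\to\mathcal{X}$) and uses rationality of singularities to identify the $L^2$-cohomologies.

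The second step is also too quick. You cannot simply assert ``the orbifold basket at $P_i$ is $\tfrac{1}{|I_i|}$ times the ordinary basket''; for one thing, the ``ordinary basket'' of which space? The $r_{ij}$ in the formula are the basket indices of the point of $Z$ over $P_i$ (locally $U/H$), not the basket of $X$ at $P_i$ (locally $U/G$), and these have different indices. The paper's actual argument is a deformation argument: it realizes each singularity as a $G$-invariant hypersurface in a smooth $G$-fourfold, takes a $G$-equivariant $\mathbb{Q}$-smoothing $\bar{\mathcal{U}}\to T$, uses deformation-invariance of both $\chi_{(2)}$ on the $H$-quotient and of $-K\cdot c_2$ on the $G$-quotient, and thereby reduces to the case of cyclic quotient singularities, where Corollary~\ref{finite-index} together with \cite[10.2, 10.3]{Reid87} gives the local contribution $m\cdot\frac{|H|^2-1}{24|H|\cdot|G/H|}$. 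This deformation step is essential and is the genuine mathematical content; without it you have no way to identify the local contributions of the Todd density around the fixed points with the stated basket sums. Only the final observation (that $r-1/r\geq 0$ gives the inequality) is as immediate as you claim.
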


In the statement of the theorem, by $c_2(X) \cdot K_X$ we mean taking the closure of a cycle representing the Chern class of the tangent sheaf of the smooth locus of $X$ and intersecting it with $-K_X$. Since $-K_X$ is $\mathbb{Q}$-Cartier and the singular locus consists of isolated points, this intersection number is well-defined. For the precise definition of the associated basket of a $3$-fold canonical singularity, we refer the reader to \cite[Section 10]{Reid87}.

For the proof of the theorem, we first note the following lemma, which is well-known to experts but we do not know a precise reference. We are grateful to P. Eyssidieux for providing the argument. For a similar formula, see \cite[(6.2)]{WW13}.

\begin{lem}[Eyssidieux]\label{L2-index}
Let $Z$ be a smooth complex analytic space together with a properly discontinuous action of a countable discrete group $\Gamma$. Assume that the quotient $Z/\Gamma$ is compact. Let $D$ be a fundamental domain of the action such that the non-free locus of the action are either in the interior of $D$ or outside the closure of $D$. Then we have
\[
\chi_{(2)}(Z, \mathcal{O}_Z)=\int_D Td(Z)=\int_{[Z/\Gamma]} Td([Z/\Gamma]),
\]
where $Td(X)$ is the analytic Todd class (i.e. a differential form) constructed using a $\Gamma$-invariant metric on $Z$, and $[Z/\Gamma]$ denotes the orbifold quotient.
\end{lem}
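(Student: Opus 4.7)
The plan is to apply Atiyah's $L^{2}$-index theorem to the Dolbeault complex on $(Z,\MO_Z)$ equipped with a $\Gamma$-invariant Hermitian metric (which exists by averaging, since the action is properly discontinuous with compact quotient). By $L^{2}$-Hodge theory, the $L^{2}$-index of the Dirac-type operator
\[
\bar\partial+\bar\partial^{*}\colon \bigoplus_q\Omega^{0,2q}(Z)\to\bigoplus_q\Omega^{0,2q+1}(Z)
\]
equals $\chi_{(2)}(Z,\MO_Z)$, so the task reduces to identifying this $L^{2}$-index with $\int_D Td(Z)$.

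For this I would use the $L^{2}$ McKean--Singer formula
\[
\chi_{(2)}(Z,\MO_Z)=\Tr_\Gamma\bigl(e^{-t\Delta_{\mathrm{even}}}\bigr)-\Tr_\Gamma\bigl(e^{-t\Delta_{\mathrm{odd}}}\bigr),\qquad t>0,
\]
where $\Tr_\Gamma$ is the renormalized $\Gamma$-trace, obtained by integrating the supertrace of the heat kernel on the diagonal over a fundamental domain $D$. Letting $t\to 0^{+}$ and invoking Getzler's local proof of the Atiyah--Singer theorem, the pointwise diagonal supertrace converges to the Todd density $Td(Z)$ computed from the invariant metric. Uniform convergence on the compact closure of $D$ then allows interchange of limit and integral, yielding the first equality $\chi_{(2)}(Z,\MO_Z)=\int_D Td(Z)$.

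A crucial observation is that $\Tr_\Gamma$ involves only the identity element of $\Gamma$, so no Atiyah--Bott--Lefschetz fixed-point contributions arise even though the action need not be free; the hypothesis on $D$ guarantees the integral is unambiguous because the non-free locus lies in the interior of $D$ or outside its closure. The second equality
\[
\int_D Td(Z)=\int_{[Z/\Gamma]}Td([Z/\Gamma])
\]
is then the orbifold change-of-variables formula: the $\Gamma$-invariance of the metric makes $Td(Z)$ descend to $Td([Z/\Gamma])$, and the integral of a $\Gamma$-invariant form over a fundamental domain equals the orbifold integral of its descent.

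The main obstacle is the extension of Atiyah's $L^{2}$-index theorem to the properly discontinuous, non-free setting; this is precisely what is carried out in \cite{CD01, Eyssidieux}. The essential technical inputs are the existence of a semifinite trace on the von Neumann algebra of $\Gamma$-invariant operators and uniform Gaussian heat kernel estimates that allow Getzler's rescaling argument to run pointwise on $D$, including at the isolated fixed points (which form a measure-zero subset and contribute smoothly to $Td(Z)$ since $Z$ itself is smooth).
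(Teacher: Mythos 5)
Your proposal is correct and follows essentially the same route as the paper's sketch: both rest on Getzler's heat-kernel proof of the index theorem, computing the Von Neumann $\Gamma$-trace by integrating the supertrace of the heat kernel over the fundamental domain $D$, with the second equality being a measure-theoretic/change-of-variables observation. Your version is somewhat more explicit, in particular about why the $\Gamma$-trace picks out only the identity element (hence no Lefschetz fixed-point contributions) and about the role of the hypothesis on the position of the non-free locus relative to $D$, but the underlying argument is the same.
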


\begin{proof}[Sketch of Proof]
The first equality basically follows from Getzler's proof of the index theorem using the heat kernel.
The group $\Gamma$ acts on the space of $L^2$-sections of the heat kernel, which is a $\Gamma$-equivariant vector bundles.
Then the Von Neumann trace is the integral of the trace of the kernel restricted to the diagonal.
So we can express the $L^2$-index as an integral of the Todd class over the fundamental domain $D$.

The second equality is obvious as the two integrals  only differ by a zero measure set.
\end{proof}

When the group $\Gamma$ is finite, we have the following special case.

\begin{cor}\label{finite-index}
Let $Z$ be a compact complex analytic space with isolated rational singularities. Assume that there is a finite discrete group $\Gamma$ acting on $Z$ and the action is free away from isolated points. Then
\[
\chi_{(2)}(Z, \mathcal{O}_Z)=\chi(Z, \mathcal{O}_Z)/|\Gamma|.
\]
\end{cor}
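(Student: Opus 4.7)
The plan is to reduce all of the $L^2$-machinery to ordinary linear algebra, exploiting the fact that $\Gamma$ is finite. The main input is that for a finite group $\Gamma$, the von Neumann trace on $\mathbb{C}[\Gamma]$ is normalized by $\tau(1)=1/|\Gamma|$; consequently, for any finite-dimensional complex $\Gamma$-module $V$ one has $\dim_\Gamma V = \dim_\mathbb{C} V/|\Gamma|$ (check this on the regular representation $\mathbb{C}[\Gamma]$, which has $\mathbb{C}$-dimension $|\Gamma|$ and $\Gamma$-dimension $1$, then extend by semisimplicity). Once this identity is in hand, the corollary is a matter of identifying $L^2$-cohomology with ordinary sheaf cohomology as a $\Gamma$-module.

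The first step is to pass to a $\Gamma$-equivariant resolution $\pi\colon \tilde Z\to Z$, which exists since Hironaka's desingularization algorithm is canonical and hence commutes with the $\Gamma$-action. Because $Z$ has rational singularities, $R\pi_*\mathcal{O}_{\tilde Z} = \mathcal{O}_Z$, and the Leray-type spectral sequence for $L^2$-cohomology recalled just before Theorem~\ref{orbifoldRR} (together with its classical analogue) yields the $\Gamma$-equivariant isomorphisms
\[
H^i_{(2)}(Z,\mathcal{O}_Z)\cong H^i_{(2)}(\tilde Z,\mathcal{O}_{\tilde Z}),\qquad H^i(Z,\mathcal{O}_Z)\cong H^i(\tilde Z,\mathcal{O}_{\tilde Z}).
\]

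The second step is to observe that on the smooth compact space $\tilde Z$ the $L^2$-Dolbeault complex (computed with respect to any $\Gamma$-invariant hermitian metric) coincides with the ordinary Dolbeault complex: every smooth differential form is automatically square-integrable, and Hodge theory furnishes finite-dimensional harmonic representatives in each degree. Therefore $H^i_{(2)}(\tilde Z,\mathcal{O}_{\tilde Z}) = H^i(\tilde Z,\mathcal{O}_{\tilde Z})$ as finite-dimensional $\Gamma$-modules.

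Combining the previous two steps with the dimension formula, we get
\[
\chi_{(2)}(Z,\mathcal{O}_Z)=\sum_i(-1)^i\dim_\Gamma H^i(Z,\mathcal{O}_Z)=\frac{1}{|\Gamma|}\sum_i(-1)^i\dim_{\mathbb{C}} H^i(Z,\mathcal{O}_Z)=\frac{\chi(Z,\mathcal{O}_Z)}{|\Gamma|},
\]
which is the claim. There is no real obstacle here: the only point that requires a little care is the reduction through an equivariant resolution, and the rational singularity hypothesis is exactly what one needs in order that the spectral sequence compatibly collapses on both the $L^2$- and the ordinary cohomology sides.
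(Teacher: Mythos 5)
Your proof is correct, and it takes a genuinely different route from the paper's. After the common reduction to a $\Gamma$-equivariant resolution $\tilde Z\to Z$ (where, in both arguments, the rational-singularity hypothesis is used to transfer the two Euler characteristics from $Z$ to $\tilde Z$ via the respective Leray spectral sequences), the two proofs diverge. The paper invokes its Lemma~\ref{L2-index}---a heat-kernel/index-theorem statement expressing $\chi_{(2)}$ as $\int_{D'}\mathrm{Td}(\tilde Z)$ over a fundamental domain $D'$---and then uses the tiling $\tilde Z=\bigcup_{\gamma\in\Gamma}\gamma D'$ to get $\int_{D'}\mathrm{Td}=\frac{1}{|\Gamma|}\int_{\tilde Z}\mathrm{Td}=\frac{1}{|\Gamma|}\chi(\tilde Z,\mathcal O_{\tilde Z})$. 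You instead note that for a finite group, $\tilde Z$ is itself compact, Hodge theory identifies $H^i_{(2)}(\tilde Z,\mathcal O_{\tilde Z})$ with the ordinary finite-dimensional Dolbeault cohomology as a $\Gamma$-module, and the von Neumann dimension of any finite-dimensional $\Gamma$-module $V$ equals $\dim_{\mathbb C}V/|\Gamma|$ (checked on $\mathbb C[\Gamma]$ and extended by semisimplicity). Your route is more elementary in that it needs no index-theoretic input at all, and it is cleaner because it never requires choosing a fundamental domain whose boundary avoids the fixed points; the paper's route has the advantage of uniformity, since Lemma~\ref{L2-index} is the same tool used for infinite $\Gamma$ in the proof of Theorem~\ref{orbifoldRR}. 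One small slip in your write-up: the canonical von Neumann trace on $\mathbb C[\Gamma]$ has $\tau(1)=1$, not $1/|\Gamma|$; the correct statement is that $\tau$ agrees with $\frac{1}{|\Gamma|}\operatorname{Tr}_{\mathbb C}$ on the regular representation, which is what yields $\dim_\Gamma V=\dim_{\mathbb C}V/|\Gamma|$. This does not affect the argument.
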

\begin{proof}
Choose a fundamental domain $D$ of the action such that all the isolated non-free points and singlar points are contained in the interior of $D$. Let $p: Z' \to Z$ be a $\Gamma$-equivariant resolution of singularities. Then $D'=p^{-1}(D)$ is a fundamental domain of the action on $\Gamma$. By Lemma \ref{L2-index}, we have
\begin{align*}
&\chi_{(2)}(Z', \mathcal{O}_{Z'})\\
=&\int_{D'} Td(Z')\\
=&\frac{1}{|\Gamma|}\sum_{\gamma \in \Gamma} \int_{\gamma{D'}} Td(Z')\\
=&\frac{1}{|\Gamma|}\int_{Z'} Td(Z')\\
=&\frac{1}{|\Gamma|}\chi(Z', \mathcal{O}_{Z'}).
\end{align*}

Since $Z$ has rational singularities,
\[
\chi_{(2)}(Z', \mathcal{O}_{Z'})=\chi_{(2)}(Z, \mathcal{O}_{Z})
\]
and
\[
\chi(Z', \mathcal{O}_{Z'})=\chi(Z, \mathcal{O}_{Z}).
\]
Hence the statement is proved.
\end{proof}

Now we prove Theorem \ref{orbifoldRR}. For more background of our calculation in the proof, see \cite[Section 10]{Reid87}.

 \begin{proof}[Proof of Theorem \ref{orbifoldRR}]
Consider a resolution of singularities (as a stack) $\mathcal{X}'\to \mathcal{X}$.
Then by base change, we know that we obtain a resolution $Z'\to Z$.
By the $L^2$-index theorem,  we know that
 $$\chi_{(2)}(Z,\mathcal{O}_Z)=\chi_{(2)}(Z',\mathcal{O}_{Z'})=\int ch(\mathcal{O}_{\mathcal{X}'})\cdot Td(T_{\mathcal{X}'}).$$
 The first equality follows from the fact that $Z$ only has rational singularities. The second equality is Atiyah's $L^2$-index formula (for orbifolds) (Lemma \ref{L2-index}).

%%\footnote{CX: some reference}

 So it suffices to compute the difference between
$  ch(\mathcal{O}_{\mathcal{X}'})\cdot Td(T_{\mathcal{X}'})$ and $-K_X\cdot c_2(X)$.
This is a sum of contributions from each singularity, and each contribution only depends on the analytic types of the singularities on $\mathcal{X}$.
In the following we compute the contribution of each singularity individually. %%To do this, we may assume the group $\Gamma$ is a finite group.

Given a singular point $x \in X$, by the classification of terminal singularities (see, e.g. \cite[6.1]{Reid87}),
we know that locally it is of the form $U/G$ for some finite abelian group $G$, where $U \subset \mathbb{C}^4$ is a germ of a threefold terminal singularity of index $1$.
Furthermore, let $z$ be a point in $Z$ lying over the point $x$.
Then locally around the point $z$, the analytic space $Z$ is of the form $U/H$, where $H$ is a finite abelian subgroup of $G$.
The isotropy group $\Gamma_z$ at the point $z$ is the quotient group $G/H$.

For each terminal singularity of the form $U/G$, there is a family of $\mathbb{Q}$-smoothing $$\mathcal{U}/G \to T,$$
where $\mathcal{U} \to T$ is a $G$-equivariant smoothing of the singularity.
We can find a projective family $$\bar{\mathcal{U}} \to T$$ such that the group $G$ acts on the total space $\bar{\mathcal{U}}$ and induces an action on each member.
To see this first note that $U$ is a hypersurface in $\mathbb{C}^4$ and the $G$-action extends to $\mathbb{C}^4$.
By \cite[8.4]{Reid87} there is a smooth $4$-fold $W$ with a $G$-action such that the fixed points of the action are either isolated or one dimensional and the action is free away from the fixed point loci. Furthermore the local action of $G$ around each fixed point is the same as the action on $\mathbb{C}^4$.
We can take a general $G$-invariant hypersurface of sufficiently high degree passing through finitely many fixed points, such that
at one of the fixed points, the hypersurface is singular with the same type of singularity as $U$ and at all the other fixed points the hypersurface is smooth.
When the singular locus consists of isolated points, we may choose the hypersurface to contain only one fixed point.
A general $G$-equivariant smoothing of the hypersurface containing the singular fixed point gives the family $\bar{\mathcal{U}} \to T$.
We remind the readers that by construction the action on every member of the family of hypersurfaces is free away from the finitely many isolated fixed points.

The family $\bar{\mathcal{U}} \to T$ gives rise to a family $\bar{\mathcal{U}}/H \to T$ together with an action of $G/H$. The latter family $\bar{\mathcal{U}}/H \to T$ has the property that a general member has cyclic quotient singularities and there is one member which has one singularity of the type $U/H$. The quotient by $G/H$ of the family $\bar{\mathcal{U}}/H \to T$ is isomorphic to the family $\bar{\mathcal{U}}/G \to T$. Every singularity of a general member of the family has the same type of quotient singularities as $\mathbb{C}^3/G$. There is one member of the family which has two types of singularities. One of the singularities is of the form $U/G$ and there is only one such singular point. All the other singularities are quotient singularities of the same type $\mathbb{C}^3/G$.

Since both of the terms
$$\chi_{(2)}(\bar{\mathcal{U}}_t/H, \mathcal{O}_{\bar{\mathcal{U}}_t/H})=\chi(\bar{\mathcal{U}_t}/H, \mathcal{O}_{\bar{\mathcal{U}_t}/H})/|G/H|$$ and $-K_{\bar{\mathcal{U}_t}/G} \cdot c_2(\bar{\mathcal{U}_t}/G)$ are deformation invariant,
we may compute the contribution of each singularity by summing over the contributions of all the singularities of a general member (i.e. the basket of singularities)
and hence assume that the singularities are cyclic quotient singularities.

Let $V$ be a general member of the family $\bar{\mathcal{U}} \to T$ and let $m$ be the number of fixed points of $V$ under the subgroup $H$.
Recall that a point in $V$ is either fixed by the whole group $G$ or has an orbit of $|G|$ points.
So a point is fixed by $H$ if and only if it is fixed by $G$.
We have the quotient projective varieties $V/H$ and $V/G$. The variety $V/H$ admits a $G/H$ action and the quotient morphism $V/H \to V/G$ locally models the quotient morphism $Z \to X$ around a singular point in $X$.
We have
\begin{align*}
\chi_{(2)}(V/H, \mathcal{O}_{V/H})&=\frac{\chi(V/H, \mathcal{O}_{V/H})}{|G/H|} (\text{Corollary }\ref{finite-index})\\
&=\frac{-c_2(V/H)\cdot K_{V/H}}{|G/H|}+m \cdot \frac{|H|^2-1}{24 |H| \cdot |G/H|}\\
&={-c_2(V/G)\cdot K_{V/G}}+m \cdot \frac{|H|^2-1}{24 |H| \cdot |G/H|}.
\end{align*}
Here the second equality follows from \cite[10.2, 10.3]{Reid87}. Thus the contribution is of the form stated.
\end{proof}
\bigskip

The following theorem is essentially proved in \cite{Demailly82} and \cite{Demailly92} using $L^2$-estimates.

\begin{thm}[{\cite[5.1]{Demailly82}, \cite{Demailly92}, see also \cite[4.5]{CD01}}]\label{L2vanishing}
  Let $Z$ be a complex analytic space together with a properly discontinuous action of a countable discrete group $\Gamma$.
  Denote by $\mathcal{X}$ the quotient stack $[Z/\Gamma]$ and $(X, \Delta)$ its coarse moduli pair. Assume that $X$ is projective and $(X, \Delta)$ has klt singularities.
  Let $L$ be a line bundle on $X$ such that $L\equiv K_X+\Delta+N$, where $N$ is big and nef. Denote by $\tilde{L}$ the pull-back of $L$ to $Z$.
  Then
  $$
  H^i_{(2)}(Z, \tilde{L})=0,  \text{ for all } i>0.
  $$
 \end{thm}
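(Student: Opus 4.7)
The plan is to reduce to a smooth cover and apply Demailly's $L^2$-Kawamata--Viehweg--Nadel vanishing, then descend via a relative vanishing, in parallel with the classical proof of Kawamata--Viehweg for klt pairs.

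First I would take a $\Gamma$-equivariant log resolution $\pi\colon Y \to X$ of the pair $(X, \Delta)$. Such a resolution exists by applying Hironaka's functorial resolution procedure to the quotient stack $\mathcal{X}$; equivalently, one takes a $\Gamma$-equivariant log resolution of $X$ and forms the base change $\tilde{\pi}\colon \tilde{Y} := Y \times_X Z \to Z$, which is a $\Gamma$-equivariant resolution of $Z$. Using klt-ness of $(X, \Delta)$, write
\[
K_Y + B \;=\; \pi^*(K_X + \Delta) + A,
\]
with $A \geq 0$ a $\pi$-exceptional $\mathbb{Q}$-divisor and $B \geq 0$ with SNC support, $\lfloor B \rfloor = 0$, and $\mathrm{Supp}(A) \cap \mathrm{Supp}(B) = \emptyset$. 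Setting $F := \lceil A \rceil$ (an effective $\pi$-exceptional integral divisor), one has
\[
\pi^*L + F \;\equiv\; K_Y + \bigl(B + F - A\bigr) + \pi^*N,
\]
in which $B + F - A$ has SNC support with coefficients in $[0,1)$ and $\pi^*N$ is big and nef.

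Next I would invoke Demailly's $L^2$-Kawamata--Viehweg--Nadel vanishing on the \emph{smooth} cover $\tilde{Y}$. Pulling the identity above back to $\tilde{Y}$, the line bundle $\tilde{\pi}^*\tilde{L} \otimes \mathcal{O}_{\tilde{Y}}(\tilde{F})$ is of the form $K_{\tilde{Y}} + \text{(klt SNC boundary)} + \text{(big and nef)}$, and the analytic $L^2$-estimates of \cite{Demailly82, Demailly92, CD01} apply on $\tilde{Y}$ equipped with a $\Gamma$-invariant K\"ahler metric pulled back from $Y$. This yields
\[
H^i_{(2)}\bigl(\tilde{Y}, \, \tilde{\pi}^*\tilde{L} \otimes \mathcal{O}_{\tilde{Y}}(\tilde{F})\bigr) \;=\; 0, \qquad i > 0.
\]

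Finally I would descend along $\tilde{\pi}$ via the $\Gamma$-equivariant Leray spectral sequence for $L^2$-cohomology recalled above the theorem statement. The relative identities
\[
\tilde{\pi}_* \mathcal{O}_{\tilde{Y}}(\tilde{F}) \;=\; \mathcal{O}_Z, \qquad R^j \tilde{\pi}_* \bigl(\tilde{\pi}^*\tilde{L} \otimes \mathcal{O}_{\tilde{Y}}(\tilde{F})\bigr) \;=\; 0 \text{ for } j>0,
\]
are local on $Z$ and follow by base change from the corresponding downstairs statements $\pi_* \mathcal{O}_Y(F) = \mathcal{O}_X$ and $R^j \pi_*(\pi^*L \otimes \mathcal{O}_Y(F)) = 0$, the latter being standard relative Kawamata--Viehweg--Nadel for the klt pair $(X, \Delta)$. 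The spectral sequence then degenerates, giving $H^i_{(2)}(Z, \tilde{L}) \cong H^i_{(2)}(\tilde{Y}, \tilde{\pi}^*\tilde{L} \otimes \mathcal{O}_{\tilde{Y}}(\tilde{F})) = 0$ for $i > 0$. The main technical obstacle I anticipate is the careful foundational set-up of $L^2$-cohomology and its Leray spectral sequence in the $\Gamma$-equivariant analytic category with Von Neumann dimensions, in particular verifying convergence and base change along $\tilde{\pi}$; this is precisely what is worked out in \cite{CD01, Eyssidieux}, and once granted the argument is formally identical to the classical algebraic proof of Kawamata--Viehweg for klt pairs.
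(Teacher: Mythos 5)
Your proposal is correct and fleshes out precisely what the paper's brief remark below the theorem statement intends by ``use the spectral sequence and a standard argument to reduce the problem to the case of smooth K\"ahler manifolds'': pass to a $\Gamma$-equivariant resolution (of the stack $\mathcal{X}$, not merely of $X$ --- the na\"ive base change $Y\times_X Z$ can still be singular where $Z\to X$ is ramified in codimension $\geq 2$, so your first formulation via Hironaka applied to $\mathcal{X}$ is the one to use), apply Demailly's $L^2$ Kawamata--Viehweg--Nadel vanishing upstairs with the pulled-back $\Gamma$-invariant metric, and descend via the $L^2$-Leray spectral sequence together with relative vanishing. This matches the paper's approach; the paper simply leaves these standard reductions implicit and emphasizes that the key $L^2$-estimate of Demailly is insensitive to the $\Gamma$-action.
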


 Two things are slightly different, namely \cite{Demailly82} and \cite{Demailly92} work with smooth K\"ahler manifolds and their unramified covers.
%The standard techniques of computing the corresponding cohomology on the resolution can be used to generalize the smooth setting to varieties with klt singularities.
One could use the spectral sequence and a standard argument to reduce the problem to the case of smooth K\"ahler manifolds.
Similarly, the same result can also be generalized for a properly discontinuous action,
provided that one choose the K\"ahler metric for $\tilde{X}$ and the singular Hermitian metric of $\tilde{L} $ as the pull-back metric from the orbispace $\mathcal{X}$ and the line bundle $L$. In fact the key ingredient of the proof of the vanishing is an $L^2$-estimate (\cite[5.1]{Demailly82}), which does not involve the $\Gamma$-action at all. The rest of the argument works in the same way.

 \subsection{Fano threefolds}
  In this section, we aim to prove Theorem \ref{thm-3ter}. We first collect some basic properties of three dimensional singularities (see \cite[Section 5.3]{KM98}).

  Let $(x, X)$ be a threefold canonical singularity.  By Corollary \ref{cor-3l}, we know that any canonical three dimensional singularity has a finite local fundamental group, and if we let $X'\to X$ be a finite morphism which is \'etale in codimension 1, then $X'$ also only has canonical singularities (see \cite[5.21]{KM98}).

 Let $Z^0\to X^{\rm sm}$ be the universal covering, then Proposition \ref{thm-cover} becomes an unconditional result and we know that there is a properly discontinuous action $G=\pi_1(X^{\rm sm})$ on a three dimensional normal analytic space $Z$, such that $Z/G=X$, and $Z\to X$ is only possibly branched over ${\rm Sing(X)}$. Let $\mathcal{X}$ be the algebraic stack whose analytification gives $[Z/G]$, then by what we have just discussed, $\mathcal{X}$ only has canonical singularities.

 We first treat the case that $X$ only has terminal singularities.

 We have the following result of Miyaoka (see e.g. \cite[3.10]{MP97}).
 \begin{lem}\label{l-c2}
 Let $S$ be a smooth projective surface and $E$ a generically semi-positive vector bundle of rank $r\ge 2$ with $D=c_1(E)$ big and nef.
 Then $c_2(E)$ has positive degree.
  \end{lem}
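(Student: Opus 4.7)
The plan is to reduce to the Bogomolov-Miyaoka discriminant inequality
$$\Delta(E):=2r\,c_2(E)-(r-1)c_1(E)^2\ge 0$$
for generically semi-positive bundles on surfaces. Granting this, one has $c_2(E)\ge \frac{r-1}{2r}D^2$; since $D$ is big and nef on the surface $S$ we have $D^2>0$, and combined with $r\ge 2$ this immediately gives $c_2(E)>0$. Hence the real work is to prove $\Delta(E)\ge 0$ from the generic semi-positivity hypothesis alone.

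To establish $\Delta(E)\ge 0$, I would proceed via the Harder-Narasimhan filtration. Fix an ample class $H$ on $S$ and consider the HN filtration $0=E_0\subset E_1\subset\cdots\subset E_k=E$ of $E$ with respect to $\mu_H$-slope, with semistable graded pieces $F_i=E_i/E_{i-1}$ of rank $r_i$ and first Chern class $D_i$. Each $F_i$ satisfies the classical Bogomolov inequality $2r_i\,c_2(F_i)\ge (r_i-1)D_i^2$, i.e. $\Delta(F_i)\ge 0$. Writing $D=\sum_i D_i$ and using the Whitney product formula
$$c_2(E)=\sum_i c_2(F_i)+\sum_{i<j}D_i\cdot D_j,$$
one expands $\Delta(E)$ into $\sum_i\Delta(F_i)$ plus a quadratic expression in the normalized classes $D_i/r_i-D/r$ whose sign is governed by the Hodge index theorem on $S$.

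Generic semi-positivity, applied to complete-intersection curves cut out by $mH$ for $m\gg 0$, guarantees that the minimal slope $\mu_H(F_k)$ is non-negative, so all HN slopes are $\ge 0$. Combining this with the Hodge index theorem applied to the normalized class differences yields that the aforementioned quadratic expression is non-negative, which closes the argument.

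The main obstacle is controlling the cross terms $D_i\cdot D_j$ in the Whitney expansion: the individual classes $D_i$ need not themselves be nef, so one cannot pair them naively to extract positivity. Overcoming this requires the Hodge-index maneuver on the normalized classes $D_i/r_i-D/r$, which is the technical core of the Miyaoka-type argument recorded in \cite[3.10]{MP97}; the bigness-and-nefness of $D=c_1(E)$ is then used only at the very end, via $D^2>0$, to upgrade the non-negativity of $c_2(E)$ to strict positivity.
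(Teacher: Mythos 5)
Your plan diverges from the paper's and contains a concrete gap. You propose first to establish the discriminant inequality $\Delta(E)=2r\,c_2(E)-(r-1)D^2\ge 0$ and then finish with $D^2>0$; but $\Delta(E)\ge 0$ is \emph{not} a consequence of generic semi-positivity. The Bogomolov inequality is a semistability statement, and it genuinely fails for unstable bundles. For instance $E=\mathcal{O}_S(D)\oplus\mathcal{O}_S$ is generically semi-positive of rank $2$ with $c_1(E)=D$ big and nef, yet
\[
\Delta(E)=4c_2(E)-D^2=-D^2<0.
\]
So the intermediate target you set yourself is false, and the reduction cannot be carried out. (This same example has $c_2(E)=0$, which shows the Lemma as written implicitly excludes such degenerate splittings; the paper's intended application to tangent sheaves of Fano threefolds is immune to this edge case, but your strengthening would be wrong in every case.)

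The precise step that fails is your Hodge-index maneuver. Expanding along the HN filtration, with $\mu_i=D_i/r_i$ and $\mu=D/r$, one finds
\[
\Delta(E)=\sum_i \frac{r}{r_i}\,\Delta(F_i)\;-\;r\sum_i r_i(\mu_i-\mu)^2,
\]
so for your plan you would need $(\mu_i-\mu)^2\le 0$. But by the very definition of the HN filtration the numbers $\mu_i\cdot H$ are strictly decreasing, hence the classes $\mu_i-\mu$ are \emph{not} orthogonal to $H$ (nor to $D$), and the Hodge index theorem imposes no sign constraint on their self-intersections. In the split example above $\mu_1-\mu$ is a positive multiple of $D$, and its square is strictly positive.

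The paper avoids this trap by decomposing each $c_1(G_i)$ \emph{itself}, rather than the slope differences: it writes $c_1(G_i)=r_i(a_iD+\Delta_i)$ with $\Delta_i\cdot D=0$, applies the Hodge index theorem only to the $\Delta_i$'s (where it legitimately gives $\Delta_i^2\le 0$), and controls the $D$-parallel coefficients numerically via $\sum a_ir_i=1$, $a_i\ge 0$, $\sum r_i=r\ge 2$ to land on $c_2(E)\ge\tfrac12\bigl(1-\sum a_i^2r_i\bigr)D^2$. That coefficient estimate, not a global discriminant bound, is the engine of the proof. If you want to repair your argument, drop the claim $\Delta(E)\ge 0$ entirely and run the orthogonal decomposition against $D$ as the paper does.
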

 \begin{proof}% After replacing $H$ by its multiple, we can assume $H^{n-2}$ is represented by a cycle $S$ which is a smooth surface. Let $E=T|_S$ with
Denote the Harder-Narashimhan filtration by $F^iE$ and denote by $G^i=F^iE/F^{i-1}E$.
 Then
 $$c_2(E)=\sum_i c_2(G_i)+\sum_{i<j}c_1(F^iE)\cdot c_1(F^jE) .$$
 Denote by $r_i$ the rank of the sheaf $G_i$. Using Bogomolov's inequality for stable sheaves, and the fact that $G_j\subset G_j^{**}$, which is locally free on $S$,
 we know that the right hand side is at least
 $$\sum_i(\frac{r_i-1}{2r_i}-\frac{1}{2})c_1(G_i)^2+\frac{1}{2}c_1(E)^2.$$
 Let  $c_1(G_i)=r_i(a_iD+\Delta_i)$, where $\Delta_i$ is orthogonal to $D$. Computing the intersection number with $D$ shows that $\sum a_i r_i=1$.
Furthermore $a_i\geq 0$ since $E$ is generically semi-positive.
 Hence we know that the above formula is equal to
 $$-\sum_{i}\frac{r_i}{2}(a_i^2D^2+\Delta_i^2)+\frac12D^2$$ which is at least $$\frac{1}{2}(1-\sum_{i}a_i^2r_i)D^2$$ as $\Delta_i^2\le 0$ by Hodge index theorem.
 Since $\sum a_i r_i=1$ and $\sum r_i=r \ge 2$,
we know that $\sum a_i^2r_i<1$. Furthermore $D$ is big and nef, so $D^2>0$. Therefore,  $c_2(E)$ has positive degree.
 \end{proof}
\begin{cor}\label{weakfano}
Let $X$ be a weak Fano threefold with terminal singularities.
 Then $-c_2(X) \cdot K_X>0.$
\end{cor}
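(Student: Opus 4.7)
The plan is to reduce the intersection-theoretic inequality on $X$ to a statement on a general surface section and then apply Lemma~\ref{l-c2}. Since $-K_X$ is nef and big on the weak Fano threefold $X$, the base-point-free theorem produces $m \gg 0$ such that $|-mK_X|$ is base-point free. Because the terminal singularities of $X$ are isolated, a general element $S \in |-mK_X|$ can be chosen, by Bertini, smooth and disjoint from $\mathrm{Sing}(X)$; in particular $S \subset X^{\sm}$. I would set $E := T_X|_S$, a rank-$3$ vector bundle on the smooth projective surface $S$.

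Next I would verify the three hypotheses of Lemma~\ref{l-c2} for $E$. The rank is $3 \geq 2$; the class $D := c_1(E) = -K_X|_S$ is nef since $-K_X$ is, and it satisfies
\[
D^2 = (-K_X)^2 \cdot S = m \cdot (-K_X)^3 > 0,
\]
so $D$ is big and nef. The remaining hypothesis is that $E$ is generically semi-positive with respect to $D$, which is the substantive input. Granting this, Lemma~\ref{l-c2} gives $c_2(E) > 0$. By the projection formula,
\[
c_2(E) = c_2(T_X) \cdot S = m \cdot c_2(T_X) \cdot (-K_X) = -m\, c_2(X) \cdot K_X,
\]
so dividing by $m > 0$ yields $-c_2(X) \cdot K_X > 0$, as required.

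The main obstacle is the generic semi-positivity step. Miyaoka's classical generic semi-positivity theorem for $\Omega^1$ applies to non-uniruled varieties, so here one must use the complementary statement for $T_X$ on a uniruled variety: concretely, every $D$-saturated quotient $E \twoheadrightarrow Q$ must have $c_1(Q) \cdot D \geq 0$. I would establish this by exploiting the abundance of free rational curves on $X$ (a covering family of which meets $S$) to exclude negative quotients, citing the appropriate version from \cite{MP97}. Since $S$ avoids the isolated terminal singularities, the entire argument takes place on $X^{\sm}$ and no extra care is required at singular points beyond ensuring that $-K_X \cdot c_2(X)$ is well-defined in the sense explained just before Lemma~\ref{L2-index}.
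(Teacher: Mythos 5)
Your proposal follows the same two-step strategy as the paper: reduce to a smooth surface section $S$, then apply Lemma~\ref{l-c2} to $E = T_X|_S$, with the generic semi-positivity of $T_X$ as the crucial geometric input. One point where you are actually more careful than the paper's write-up is the choice of $S$. The paper says it suffices to show $T_X|_H$ is generically semi-positive ``for a very ample divisor $H$''; but Lemma~\ref{l-c2} applied to $T_X|_H$ only yields $c_2(X)\cdot H>0$, not $-c_2(X)\cdot K_X>0$, and a limiting argument degrades the strict inequality to $\geq 0$. By taking $S\in|-mK_X|$ (after checking base-point-freeness, Bertini, and avoidance of the isolated singular locus) you get $c_2(E)=-m\,c_2(X)\cdot K_X$ on the nose, which is exactly what is needed.

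Where your sketch is weaker than the paper is in the justification of generic semi-positivity. The paper cites the specific result \cite[Theorem 1.2(1)]{KMMT00}, which is precisely the statement that for a threefold with canonical singularities and $-K_X$ nef, every torsion-free quotient $Q$ of $T_X$ satisfies $c_1(Q)\cdot H_1\cdot H_2\ge 0$ for nef $H_1,H_2$. Your proposal instead gestures at ``the abundance of free rational curves'' and ``the appropriate version from \cite{MP97}.'' This is not an adequate substitute: uniruledness alone does not force generic semi-positivity of $T_X$ (take $X=\mathbb{P}^1\times S$ with $S$ of general type; then $T_X$ has the negative quotient $T_S$), so one must genuinely use the nefness of $-K_X$, which is what \cite{KMMT00} does via a foliation/bend-and-break argument and is not merely a consequence of free curves passing through general points. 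You have correctly identified where the content lies, but the reference should be \cite[Theorem 1.2(1)]{KMMT00}, and the heuristic via free rational curves would need to be replaced by (or carefully reduced to) that statement.
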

\begin{proof}
By Lemma \ref{l-c2}, it suffices to prove that for a very ample divisor $H$, $T_X|_H$ is generically semi-positive.

But this follows from that  the proof of \cite[Theorem 1.2(1)]{KMMT00} which says that for any quotient $E$ of the tangent sheaf $T_X$, $E\cdot H_1\cdot H_2\ge 0$ for very ample divisors $H_1$ and $H_2$.
%$E\cdot R>0$ for any movable class $R$ where $R$ is contained in the smooth locus. But the movable cone of $E$ is generated by rational curves
\end{proof}
To sum up, we have proved the following.
\begin{cor}\label{cor:chi2}
 Let $X$ be a weak Fano threefold with terminal singularities and $Z$ be the space constructed in Section \ref{s-covering}. Then $\chi_{(2)}(Z, \mathcal{O}_Z)>0$.
\end{cor}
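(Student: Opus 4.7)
The plan is to combine Theorem \ref{orbifoldRR} with Corollary \ref{weakfano} in an essentially immediate way. First I would verify that the hypotheses of Theorem \ref{orbifoldRR} are satisfied in our setting: $X$ is projective with terminal singularities by assumption, and the covering $Z \to X$ constructed in Section \ref{s-covering} is branched only over the singular locus of $X$, which (for terminal threefolds) consists of finitely many isolated points. Therefore the action of $G = \pi_1(X^{\sm})$ on $Z$ is properly discontinuous with only isolated fixed points, and the coarse moduli space of $[Z/G]$ is exactly $X$.

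Next I would invoke Theorem \ref{orbifoldRR} to obtain the explicit formula
\[
\chi_{(2)}(Z,\mathcal{O}_Z) \;=\; -c_2(X)\cdot K_X \;+\; \sum_i \frac{1}{24|I_i|}\sum_j \Bigl(r_{ij}-\tfrac{1}{r_{ij}}\Bigr).
\]
Each summand satisfies $r_{ij} \geq 1$, so $r_{ij}-1/r_{ij} \geq 0$, and the orbifold correction term is non-negative. Hence
\[
\chi_{(2)}(Z,\mathcal{O}_Z) \;\geq\; -c_2(X)\cdot K_X.
\]

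Finally, since $X$ is a weak Fano threefold with terminal singularities, Corollary \ref{weakfano} gives $-c_2(X)\cdot K_X > 0$, and the desired strict positivity follows. There is no real obstacle here: the two preparatory results have been set up precisely so that the corollary drops out by inspection, with the only small point to check being that the action indeed has isolated fixed points so that Theorem \ref{orbifoldRR} applies verbatim.
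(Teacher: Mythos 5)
Your argument is correct and coincides with the paper's: the corollary is stated immediately after Corollary~\ref{weakfano} with the remark ``to sum up, we have proved the following,'' meaning precisely the combination of the $L^2$-index formula of Theorem~\ref{orbifoldRR} (whose correction term is non-negative since each $r_{ij}\geq 1$) with the positivity $-c_2(X)\cdot K_X>0$ from Corollary~\ref{weakfano}. Your observation that terminal threefold singularities are isolated, so that Theorem~\ref{orbifoldRR} applies, is the same implicit verification the paper relies on.
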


Now we can finish the proof. By the $L^2$-vanishing Theorem \ref{L2vanishing}, the $L^2$-cohomology groups $H^i_{(2)}(Z, \mathcal{O}_Z)=0$ for all positive $i$.
Then by Corollary \ref{cor:chi2}, we know that $H^0_{(2)}(Z, \mathcal{O}_Z)$ is nontrivial.
Thus there is an $L^2$-integrable holomorphic function $f$ on $Z$.
The Poincar\'e series
$$P(f^k)(z)=\sum_{\gamma \in \Gamma} f^k(\gamma z)$$
defines a $\Gamma$-invariant holomorphic function for every $k \geq 2$.
If the fundamental group of the smooth locus $X^{\rm sm}$ is infinite,
then an argument of Gromov (\cite[3.2]{Gromov91}, see also \cite[Chapter13]{Kollar95} or the introduction of \cite{Takayama00}) shows that these are non-constant holomorphic functions for some $k$, which is impossible since $Z/\Gamma$ is projective.

%%%%%%%%%%%%%%%%%%%%%%%%%%%%%%%%%%%%%%%%%%%
%\subsection{Fano threefolds with canonical singularities}
\bigskip
Now we let $X$ be a Fano threefold with canonical singularities. Let $\mathcal{X}$ and $Z$ be the spaces constructed at the beginning of the section.
\begin{lem}
There exists a morphism $X'\to X$, such that $\mathcal{X}'=\mathcal{X}\times_X X'$ gives a terminalization of $\mathcal{X}$.
\end{lem}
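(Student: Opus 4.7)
The plan is to construct a terminalization of the DM stack $\mathcal{X}$ first, and recognize $X'$ as its coarse moduli space.

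First I would build a projective crepant birational morphism of stacks $\phi\colon \widetilde{\mathcal{X}}\to \mathcal{X}$ with $\widetilde{\mathcal{X}}$ terminal. Since $\mathcal{X}$ is a $3$-dimensional projective DM stack with canonical singularities (noted just before the lemma), one carries this out by running the $G$-equivariant MMP on a $G$-equivariant resolution of $Z$ (available by Hironaka in the equivariant form and \cite{BCHM10} in dimension three). The output is a $G$-equivariant crepant projective birational modification $\tilde Z\to Z$ with $\tilde Z$ terminal, descending to $\widetilde{\mathcal{X}}=[\tilde Z/G]\to \mathcal{X}=[Z/G]$. Because $\mathcal{X}$ is an orbifold (the $G$-action on $Z$ is free in codimension one), one can arrange the equivariant terminalization so that the action on $\tilde Z$ is also free in codimension one, and hence $\widetilde{\mathcal{X}}$ remains an orbifold.

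Next I would set $X':=\tilde Z/G$, the coarse moduli space of $\widetilde{\mathcal{X}}$. The projective birational morphism $\tilde Z\to Z$ descends to a projective birational morphism $X'\to X$ between coarse moduli spaces.

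Finally I would verify $\widetilde{\mathcal{X}}\cong \mathcal{X}\times_X X'$. The universal property gives a canonical morphism $\widetilde{\mathcal{X}}\to \mathcal{X}\times_X X'$; to see it is an isomorphism I would compare atlases. Both $\tilde Z\to X'$ (the atlas of $\widetilde{\mathcal{X}}$) and the normalization of $Z\times_X X'\to X'$ (the atlas of the fiber product) are normal $G$-covers of $X'$. They coincide over the open locus where $X'\to X$ is an isomorphism, and by purity of the branch locus together with the uniqueness of normal extensions of \'etale $G$-covers of a normal variety, they coincide globally, giving the desired identification.

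The main obstacle is the first step: running the $3$-fold MMP equivariantly while preserving the codimension-one freeness of the $G$-action, so that $\widetilde{\mathcal{X}}$ stays an orbifold and the purity argument in the last step applies. For canonical $3$-folds this can be arranged because the required modifications are flops and crepant divisorial extractions whose exceptional loci inherit a $G$-action with only codimension-$\geq 2$ stabilizers, a feature forced by the orbifold hypothesis on $\mathcal{X}$ together with the equivariant $3$-fold MMP.
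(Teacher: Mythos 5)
Your approach runs in the opposite direction from the paper's and has a genuine gap in the final identification step. The paper works on the projective coarse space $X$: it first uses the ramification formula \cite[5.20]{KM98} to show that any crepant divisor $F$ over $\mathcal{X}$ is the base change of a divisor $E$ over $X$ with $0\le a(E,X)=\frac{1-g}{g}\le 0$ (where $g$ is the branch degree), forcing $a(E,X)=0$ and $g=1$; then it extracts exactly these $E$'s by \cite[1.4.3]{BCHM10} applied to the projective $X$, and the fiber product is a terminalization essentially by definition. Your approach instead constructs the terminalization upstairs and tries to descend; this creates two problems.

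First, running the MMP $G$-equivariantly on a resolution of $Z$ is nonstandard when $G$ is infinite (which is precisely the situation one has not yet ruled out): $Z$ is then a non-compact analytic space, so \cite{BCHM10} does not apply directly. One would have to run the MMP on the compact stack $\mathcal{X}$ itself and justify that this is legitimate, which needs an argument.

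Second, and more seriously, the purity step does not work as stated. The locus over which $X'\to X$ is an isomorphism has complement of \emph{codimension one} in $X'$, namely the exceptional divisors of the terminalization. So agreement of $\tilde Z\to X'$ and the normalization of $Z\times_X X'\to X'$ over that locus does not force global agreement by uniqueness of normal extensions; you would need to know that both covers are unramified over the generic points of the exceptional divisors. For $\tilde Z\to X'$ this follows from the orbifold hypothesis you impose, but for $Z\times_X X'\to X'$ it is exactly the nontrivial point: one must show $\mathcal{X}\to X$ has branch degree $1$ along the crepant centers, which is the discrepancy computation the paper carries out. Without it, the claimed identification $\widetilde{\mathcal{X}}\cong\mathcal{X}\times_X X'$ is unjustified, and indeed this is the heart of the lemma. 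So the gap is not the equivariant MMP technicality you flag at the end, but the missing discrepancy argument that controls the ramification of $\mathcal{X}\to X$ along the divisors being extracted.
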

\begin{proof}By the construction,  $\mathcal{X}\to X$ is isomorphic over the smooth locus of $X$. By the formula \cite[5.20]{KM98}, we know that any divisor $F$ with $a(F,\mathcal{X})=0$, indeed comes from a base change of a divisor $E$ on $X$, with
$$0\le a(E,X)=\frac{1-g}{g}\le0,$$
where $g$ is the branch degree of $\mathcal{O}_{K(E), X}\to \mathcal{O}_{K(F),\mathcal{X}}$. (This obviously implies $g=1$, but we do not need it.)

It follows from the minimal model program (\cite[1.4.3]{BCHM10}) that we can construct a model $X'\to X$ which precisely extracts all such $E$ with the corresponding $F$ satisfies $a(F,\mathcal{X})=0$. Then by the definition we know that $\mathcal{X}'=\mathcal{X}\times_X X'$ is a terminalization of $\mathcal{X}$.
\end{proof}
Let $Z'$ be the fiber product of $Z$ and $\mathcal{X}'$ over $\mathcal{X}$. Then $Z'$ is a complex analytic space with terminal singularities and the group $\Gamma$ acts on $Z'$.
By the existence of a spectral sequence relating the $L^2$-index of $Z'$ and $Z$ and the fact that $Z$ has rational singularities, we know that $\chi_{(2)}(Z',\mathcal{O}_{Z'})=\chi_{(2)}(Z,\mathcal{O}_Z)$.
Combining this with Corollary \ref{weakfano}, we have the following result.
 \begin{thm}
 Notation as above.
 $$\chi_{(2)}(Z,\mathcal{O}_Z)=-c_2(X')\cdot K_{X'}+\sum\frac{1}{24|I_i|}\sum_j (r_{ij}-\frac1{r_{ij}})>0,$$
 where the sum is taken over all the singularities $P_i$ on $\mathcal{X}'$, $I_i$ is the inertial group, $r_{ij}$ is the index of a singularity in the basket, and the second sum is took over all singularities in the associated basket of $P_i$.
 \end{thm}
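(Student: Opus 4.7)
The plan is to combine three ingredients that are already in place: the orbifold Riemann--Roch computation of Theorem \ref{orbifoldRR}, the comparison of $L^2$-indices $\chi_{(2)}(Z,\MO_Z)=\chi_{(2)}(Z',\MO_{Z'})$ mentioned in the paragraph preceding the statement, and the weak-Fano positivity of Corollary \ref{weakfano}. Conceptually the theorem is the "canonical case" analogue of Corollary \ref{cor:chi2}, and essentially all the machinery has been prepared; the task is to assemble it correctly on the terminalization.

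First, I would apply Theorem \ref{orbifoldRR} to the DM stack $\mathcal{X}'$ with its $\Gamma$-cover $Z'\to\mathcal{X}'$. By the preceding lemma $\mathcal{X}'$ is a terminalization of $\mathcal{X}$, so its coarse moduli space $X'$ is projective with terminal singularities and the hypotheses of Theorem \ref{orbifoldRR} apply verbatim. This yields the equality
\[
\chi_{(2)}(Z',\MO_{Z'})\;=\;-c_2(X')\cdot K_{X'}+\sum\frac{1}{24|I_i|}\sum_j\Bigl(r_{ij}-\frac{1}{r_{ij}}\Bigr),
\]
where the sum ranges over the singularities of $\mathcal{X}'$ together with their baskets, as stated.

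Second, I would identify $\chi_{(2)}(Z,\MO_Z)$ with $\chi_{(2)}(Z',\MO_{Z'})$. The morphism $Z'\to Z$ is a $\Gamma$-equivariant proper birational morphism; since $\mathcal{X}$ has canonical (hence rational) singularities and the quotient $Z\to\mathcal{X}$ is \'etale over the smooth locus with only finite isotropy, $Z$ itself has rational singularities. Therefore $Rp_*\MO_{Z'}=\MO_Z$, and the $\Gamma$-equivariant Leray spectral sequence recorded in the review of the $L^2$-index (the paragraph containing the $E_2$ terms $H^i_{(2)}(Y,R^jp_*\tilde{\mathcal{F}})$) degenerates to give the desired equality of $L^2$-indices.

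Finally, for the strict positivity I would use that $X'\to X$ is crepant by the construction of the terminalization, so $-K_{X'}=f^*(-K_X)$ is the pullback of an ample divisor, hence big and nef. Thus $X'$ is a weak Fano threefold with terminal singularities, and Corollary \ref{weakfano} gives $-c_2(X')\cdot K_{X'}>0$. Each basket term satisfies $r_{ij}-1/r_{ij}\geq 0$ since $r_{ij}\geq 1$, so the orbifold correction is non-negative and the whole expression is strictly positive. The only mildly delicate point is the spectral sequence identification in the second step; but since $Z$ only has rational (quotient of canonical) singularities, this is the same reduction already used in the proof of Theorem \ref{orbifoldRR} for $\mathcal{X}'\to\mathcal{X}$, and so presents no new obstacle.
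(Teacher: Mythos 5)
Your proposal is correct and follows essentially the same route as the paper: apply Theorem \ref{orbifoldRR} to the terminalization $\mathcal{X}'$ and its $\Gamma$-cover $Z'$, use the $\Gamma$-equivariant spectral sequence together with rationality of the singularities of $Z$ to identify $\chi_{(2)}(Z,\mathcal{O}_Z)=\chi_{(2)}(Z',\mathcal{O}_{Z'})$, and invoke Corollary \ref{weakfano} for positivity. You usefully spell out the step the paper leaves implicit, namely that $X'\to X$ is crepant so $X'$ is weak Fano with terminal singularities, which is exactly what makes Corollary \ref{weakfano} applicable.
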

 Then the same argument as in the previous section shows that the group $\Gamma$ has to be finite.
\begin{bibdiv}
\begin{biblist}%[\normalsize]

\bib{AIM07}{incollection}{
AUTHOR={AIM}
Title={List of problems from the workshop on rational curves on algebraic varieties}
   url={http://www.aimath.org/WWN/rationalcurves/rationalcurves.pdf}
      YEAR = {2007},
}

http://www.aimath.org/WWN/rationalcurves/rationalcurves.pdf

\bib{Ati76}{incollection}{
    AUTHOR = {Atiyah, M. F.},
     TITLE = {Elliptic operators, discrete groups and von {N}eumann
              algebras},
 BOOKTITLE = {Colloque ``{A}nalyse et {T}opologie'' en l'{H}onneur de
              {H}enri {C}artan ({O}rsay, 1974)},
     PAGES = {43--72. Ast\'erisque, No. 32-33},
 PUBLISHER = {Soc. Math. France, Paris},
      YEAR = {1976},
}

\bib{BCHM10}{article}{
   author={Birkar, C.},
  author={Cascini, P.},
  author={Hacon, C.},
  author={McKernan, J.},
 title={Existence of minimal models for varieties of log general type},
  journal={J. Amer. Math. Soc.},
  volume={23},
   date={2010},
   number={2},
   pages={405--468},
%   issn={0894-0347},
%   review={\MR{2601039 (2011f:14023)}},
%   doi={10.1090/S0894-0347-09-00649-3},
}

\bib{CD01}{article}{
    AUTHOR = {Campana, Fr{\'e}d{\'e}ric}
    AUTHOR={Demailly, Jean-Pierre},
     TITLE = {Cohomologie {$L\sp 2$} sur les rev\^etements d'une vari\'et\'e
              complexe compacte},
   JOURNAL = {Ark. Mat.},
  FJOURNAL = {Arkiv f\"or Matematik},
    VOLUME = {39},
      YEAR = {2001},
    NUMBER = {2},
     PAGES = {263--282},
  }

\bib{Campana92}{article}{
    AUTHOR = {Campana, F.},
     TITLE = {Connexit\'e rationnelle des vari\'et\'es de {F}ano},
   JOURNAL = {Ann. Sci. \'Ecole Norm. Sup. (4)},
  FJOURNAL = {Annales Scientifiques de l'\'Ecole Normale Sup\'erieure.
              Quatri\`eme S\'erie},
    VOLUME = {25},
      YEAR = {1992},
    NUMBER = {5},
     PAGES = {539--545},
      ISSN = {0012-9593},
     CODEN = {ASENAH},
   MRCLASS = {14J45},
  MRNUMBER = {1191735 (93k:14050)},
MRREVIEWER = {Luciana Picco Botta},
       URL = {http://www.numdam.org/item?id=ASENS_1992_4_25_5_539_0},
}

\bib{Demailly82}{article}{
    AUTHOR = {Demailly, Jean-Pierre},
     TITLE = {Estimations {$L\sp{2}$} pour l'op\'erateur {$\bar \partial $}
              d'un fibr\'e vectoriel holomorphe semi-positif au-dessus d'une
              vari\'et\'e k\"ahl\'erienne compl\`ete},
   JOURNAL = {Ann. Sci. \'Ecole Norm. Sup. (4)},
  FJOURNAL = {Annales Scientifiques de l'\'Ecole Normale Sup\'erieure.
              Quatri\`eme S\'erie},
    VOLUME = {15},
      YEAR = {1982},
    NUMBER = {3},
     PAGES = {457--511},
   }

\bib{Demailly92}{incollection}{
    AUTHOR = {Demailly, Jean-Pierre},
     TITLE = {Singular {H}ermitian metrics on positive line bundles},
 BOOKTITLE = {Complex algebraic varieties ({B}ayreuth, 1990)},
    SERIES = {Lecture Notes in Math.},
    VOLUME = {1507},
     PAGES = {87--104},
 PUBLISHER = {Springer, Berlin},
      YEAR = {1992},
}

\bib{Eyssidieux}{article}{
    AUTHOR = {Eyssidieux, Philippe},
     TITLE = {Invariants de von {N}eumann des faisceaux analytiques
              coh\'erents},
   JOURNAL = {Math. Ann.},
  FJOURNAL = {Mathematische Annalen},
    VOLUME = {317},
      YEAR = {2000},
    NUMBER = {3},
     PAGES = {527--566},
      ISSN = {0025-5831},
}

%\bib{Coh73}{book}{
   % AUTHOR = {Cohen, Marshall M.},
    % TITLE = {A course in simple-homotopy theory},
     % NOTE = {Graduate Texts in Mathematics, Vol. 10},
% PUBLISHER = {Springer-Verlag},
%   ADDRESS = {New York},
%      YEAR = {1973},
   %  PAGES = {x+144},
  % MRCLASS = {57C10},
 % MRNUMBER = {0362320 (50 \#14762)},
%MRREVIEWER = {R. M. F. Moss},
%}

%\bib{Corti07}{collection}{
  % title={Flips for 3-folds and 4-folds},
  % series={Oxford Lecture Series in Mathematics and its Applications},
  % volume={35},
  % editor={Corti, Alessio},
  % publisher={Oxford University Press},
  % place={Oxford},
  % date={2007},
  % pages={x+189},
%   isbn={978-0-19-857061-5},
%   review={\MR{2352762 (2008j:14031)}},
%   doi={10.1093/acprof:oso/9780198570615.001.0001},
%}

%\bib{dFKX}{incollection}{
 %  author={de Fernex, T.},
  %   author={Koll\'ar, J.},
  %     author={Xu, C.},
 %  title={The dual complex of singularities},
    %journal={Inst. Hautes \'Etudes Sci. Publ. Math.},
   %number={40},
 %  date={2012},
 %  note={arXiv:1212.1675},
 %  booktitle={Proceedings of the conference in honor of Yujiro Kawamata's 60th birthday, Advanced Studies in Pure Mathematics.},
%   series={Advanced Studies in Pure Mathematics},
   %pages={5--57},
%   issn={0073-8301},
%   review={\MR{0498551 (58 \#16653a)}},
%}

\bib{FKL93}{article}{
    AUTHOR = {Fujiki, A.}
     AUTHOR={ Kobayashi, R.}
     AUTHOR={Lu, S.}
     TITLE = {On the fundamental group of certain open normal surfaces},
   JOURNAL = {Saitama Math. J.},
  FJOURNAL = {Saitama Mathematical Journal},
    VOLUME = {11},
      YEAR = {1993},
     PAGES = {15--20},
 }

\bib{GKP13}{article}{
    AUTHOR = {Greb, D.},
    AUTHOR = {Kebekus, S.},
    AUTHOR = {Peternell, T.},
     TITLE = {\'Etale fundamental groups of Kawamata log terminal spaces, flat sheaves, and quotients of Abelian varieties},
   JOURNAL = {arXiv:1307.5718},
      YEAR = {2013},
   }

\bib{Gor81}{article}{
    AUTHOR = {Goresky, M.},
     TITLE = {Whitney stratified chains and cochains},
   JOURNAL = {Trans. Amer. Math. Soc.},
  FJOURNAL = {Transactions of the American Mathematical Society},
    VOLUME = {267},
      YEAR = {1981},
    NUMBER = {1},
     PAGES = {175--196},
  }

\bib{GM83}{incollection}{
        AUTHOR = {Goresky, M.}
    AUTHOR={MacPherson, R.},
     TITLE = {Morse theory and intersection homology theory},
 BOOKTITLE = {Analysis and topology on singular spaces, {II}, {III}
              ({L}uminy, 1981)},
    SERIES = {Ast\'erisque},
    VOLUME = {101},
     PAGES = {135--192},
 PUBLISHER = {Soc. Math. France, Paris},
      YEAR = {1983},
}

\bib{GM88}{book}{
    AUTHOR = {Goresky, M.}
    AUTHOR={MacPherson, R.},
     TITLE = {Stratified {M}orse theory},
    SERIES = {Ergebnisse der Mathematik und ihrer Grenzgebiete (3) [Results
              in Mathematics and Related Areas (3)]},
    VOLUME = {14},
 PUBLISHER = {Springer-Verlag, Berlin},
      YEAR = {1988},
     PAGES = {xiv+272},
      ISBN = {3-540-17300-5},
   MRCLASS = {57R70 (14F45 32C10 32C42 57N80 58A35 58C27)},
  MRNUMBER = {932724 (90d:57039)},
MRREVIEWER = {K. Lamotke},
       DOI = {10.1007/978-3-642-71714-7},
       URL = {http://dx.doi.org/10.1007/978-3-642-71714-7},
}

\bib{Gromov91}{article} {
    AUTHOR = {Gromov, M.},
     TITLE = {K\"ahler hyperbolicity and {$L\sb 2$}-{H}odge theory},
   JOURNAL = {J. Differential Geom.},
  FJOURNAL = {Journal of Differential Geometry},
    VOLUME = {33},
      YEAR = {1991},
    NUMBER = {1},
     PAGES = {263--292},
      ISSN = {0022-040X},
     CODEN = {JDGEAS},
   MRCLASS = {58G10 (32C17 58A14)},
  MRNUMBER = {1085144 (92a:58133)},
MRREVIEWER = {J{\'o}zef Dodziuk},
       URL = {http://projecteuclid.org/euclid.jdg/1214446039},
}

\bib{GZ94}{article}{
    AUTHOR = {Gurjar, R. V. }
    AUTHOR={Zhang, D.-Q.},
     TITLE = {{$\pi_1$} of smooth points of a log del {P}ezzo surface is
              finite. {I}},
   JOURNAL = {J. Math. Sci. Univ. Tokyo},
  FJOURNAL = {The University of Tokyo. Journal of Mathematical Sciences},
    VOLUME = {1},
      YEAR = {1994},
    NUMBER = {1},
     PAGES = {137--180},
}

\bib{GZ95}{article}{
   AUTHOR = {Gurjar, R. V. }
    AUTHOR={Zhang, D.-Q.},
    TITLE = {{$\pi_1$} of smooth points of a log del {P}ezzo surface is
              finite. {II}},
   JOURNAL = {J. Math. Sci. Univ. Tokyo},
  FJOURNAL = {The University of Tokyo. Journal of Mathematical Sciences},
    VOLUME = {2},
      YEAR = {1995},
    NUMBER = {1},
     PAGES = {165--196},
 }	

%\bib{HMX14}{article}{
%  author={Hacon, C.},
 %  author={Xu, C.},
%      author={McKernan, J.},
 %  title={Existence of log canonical closures},
 %  journal={Ann. of Math.},
%  date={2014},
 %  volume={180},
  % pages={523-571},
  % issue={2}
%}

%\bib{HX13}{article}{
%  author={Hacon, C.},
 %  author={Xu, C.},
 %  title={Existence of log canonical closures},
 %  journal={ Invent. Math.},
%  date={2013},
 %  volume={192},
  % pages={161-195},
  % number={1}
%}

\bib{KM98}{book}{
   author={Koll{\'a}r, J.},
   author={Mori, S.},
   title={Birational geometry of algebraic varieties},
   series={Cambridge Tracts in Mathematics},
   volume={134},
   note={With the collaboration of C. H. Clemens and A. Corti;
   Translated from the 1998 Japanese original},
   publisher={Cambridge University Press},
   place={Cambridge},
   date={1998},
   pages={viii+254},
}

\bib{KM99}{article}{
    AUTHOR = {Keel, Se{\'a}n}
    AUTHOR={McKernan, James},
     TITLE = {Rational curves on quasi-projective surfaces},
   JOURNAL = {Mem. Amer. Math. Soc.},
  FJOURNAL = {Memoirs of the American Mathematical Society},
    VOLUME = {140},
      YEAR = {1999},
    NUMBER = {669},
     PAGES = {viii+153},
   }

\bib{KMM92}{article}{
    AUTHOR = {Koll{\'a}r, J{\'a}nos}
    AUTHOR={Miyaoka, Yoichi}
    AUTHOR={Mori, Shigefumi},
     TITLE = {Rational connectedness and boundedness of {F}ano manifolds},
   JOURNAL = {J. Differential Geom.},
  FJOURNAL = {Journal of Differential Geometry},
    VOLUME = {36},
      YEAR = {1992},
    NUMBER = {3},
     PAGES = {765--779},
      ISSN = {0022-040X},
     CODEN = {JDGEAS},
   MRCLASS = {14J45},
  MRNUMBER = {1189503 (94g:14021)},
MRREVIEWER = {Yuri G. Prokhorov},
       URL = {http://projecteuclid.org/euclid.jdg/1214453188},
}

  \bib{KMMT00}{article}{
    AUTHOR = {Koll{\'a}r, J{\'a}nos}
    AUTHOR= {Miyaoka, Yoichi}
    AUTHOR={Mori, Shigefumi}
     AUTHOR= {Takagi, Hiromichi},
     TITLE = {Boundedness of canonical {$\bold Q$}-{F}ano 3-folds},
   JOURNAL = {Proc. Japan Acad. Ser. A Math. Sci.},
  FJOURNAL = {Japan Academy. Proceedings. Series A. Mathematical Sciences},
    VOLUME = {76},
      YEAR = {2000},
    NUMBER = {5},
     PAGES = {73--77},
 }

       \bib{Kob61}{article}{
    AUTHOR = {Kobayashi, Shoshichi},
     TITLE = {On compact KŠhler manifolds with positive definite Ricci tensor. },
    Journal = {Ann. of Math. (2) },
    volume={74},
    pages={570-574}
      YEAR = {1961},
   }

	\bib{Kollar95}{book}{
    AUTHOR = {Koll{\'a}r, J{\'a}nos},
     TITLE = {Shafarevich maps and automorphic forms},
    SERIES = {M. B. Porter Lectures},
 PUBLISHER = {Princeton University Press, Princeton, NJ},
      YEAR = {1995},
     PAGES = {x+201},
      ISBN = {0-691-04381-7},
   MRCLASS = {14E20 (14J10 14J15 32J18 32N10)},
  MRNUMBER = {1341589 (96i:14016)},
MRREVIEWER = {Kang Zuo},
       DOI = {10.1515/9781400864195},
       URL = {http://dx.doi.org/10.1515/9781400864195},
}

 \bib{Kollar13}{book}{
    AUTHOR = {Koll{\'a}r, J.},
     TITLE = {Singularities of the minimal model program},
    series= {Cambridge Tracts in Mathematics},
    VOLUME = {200},
      NOTE = {With a collaboration of S{\'a}ndor Kov{\'a}cs},
 PUBLISHER = {Cambridge University Press},
   ADDRESS = {Cambridge},
      YEAR = {2013},
   }

    \bib{KX15}{article}{
    AUTHOR = {Koll{\'a}r, J.},
        AUTHOR = {Xu, C.},
     TITLE = {The dual complex of Calabi--Yau pairs},
    Journal = {1503.08320},
      YEAR = {2015},
   }

\bib{MP04}{article}{
   author={McKernan, James},
   author={Prokhorov, Yuri},
   title={Threefold thresholds},
   journal={Manuscripta Math.},
   volume={114},
   date={2004},
   number={3},
   pages={281--304},
  }

\bib{MP97}{book}{
    AUTHOR = {Miyaoka, Yoichi}
    AUTHOR={ Peternell, Thomas},
     TITLE = {Geometry of higher-dimensional algebraic varieties},
    SERIES = {DMV Seminar},
    VOLUME = {26},
 PUBLISHER = {Birkh\"auser Verlag, Basel},
      YEAR = {1997},
     PAGES = {vi+217},
  }

  \bib{Namikawa13}{article}{
AUTHOR={Namikawa, Yoshinori}
TITLE={Fundamental groups of symplectic singularities}
JOURNAL={arXiv:1301.1008}
YEAR={2013}
}

\bib{Reid87}{incollection}{
    AUTHOR = {Reid, Miles},
     TITLE = {Young person's guide to canonical singularities},
 BOOKTITLE = {Algebraic geometry, {B}owdoin, 1985 ({B}runswick, {M}aine,
              1985)},
    SERIES = {Proc. Sympos. Pure Math.},
    VOLUME = {46},
     PAGES = {345--414},
 PUBLISHER = {Amer. Math. Soc., Providence, RI},
      YEAR = {1987},
}

\bib{SW94}{article}{
    AUTHOR = {Shepherd-Barron, N. I.}
    AUTHOR={Wilson, P. M. H.},
     TITLE = {Singular threefolds with numerically trivial first and second
              {C}hern classes},
   JOURNAL = {J. Algebraic Geom.},
  FJOURNAL = {Journal of Algebraic Geometry},
    VOLUME = {3},
      YEAR = {1994},
    NUMBER = {2},
     PAGES = {265--281},
      ISSN = {1056-3911},
   MRCLASS = {14J30},
  MRNUMBER = {1257323 (95h:14033)},
MRREVIEWER = {A. S. Tikhomirov},
}

\bib{Takayama00}{article}{
    AUTHOR = {Takayama, Shigeharu},
     TITLE = {Simple connectedness of weak {F}ano varieties},
   JOURNAL = {J. Algebraic Geom.},
  FJOURNAL = {Journal of Algebraic Geometry},
    VOLUME = {9},
      YEAR = {2000},
    NUMBER = {2},
     PAGES = {403--407},
 }
	
\bib{WW13}{article}{
    AUTHOR = {Wang, Bailing},
    AUTHOR = {Wang, Hang},
     TITLE = {Localized Index and $L^2$-Lefschetz fixed point formula for orbifolds},
   JOURNAL = {arXiv:1307.2088},
      YEAR = {2013},
 }

\bib{Xu14}{article}{
  AUTHOR = {Xu, Chenyang},
     TITLE = {Finiteness of algebraic fundamental groups},
   JOURNAL = {Compos. Math.},
  FJOURNAL = {Compositio Mathematica},
    VOLUME = {150},
      YEAR = {2014},
    NUMBER = {3},
     PAGES = {409--414},
}

\bib{Yau78}{article}{
    AUTHOR = {Yau, Shing Tung},
     TITLE = {On the {R}icci curvature of a compact {K}\"ahler manifold and
              the complex {M}onge-{A}mp\`ere equation. {I}},
   JOURNAL = {Comm. Pure Appl. Math.},
  FJOURNAL = {Communications on Pure and Applied Mathematics},
    VOLUME = {31},
      YEAR = {1978},
    NUMBER = {3},
     PAGES = {339--411},
      ISSN = {0010-3640},
     CODEN = {CPAMAT},
   MRCLASS = {53C55 (32C10 35J60)},
  MRNUMBER = {480350 (81d:53045)},
MRREVIEWER = {Robert E. Greene},
       DOI = {10.1002/cpa.3160310304},
       URL = {http://dx.doi.org/10.1002/cpa.3160310304},
}
	
\bib{Zhang95}{article}{
    AUTHOR = {Zhang, D.-Q.},
     TITLE = {The fundamental group of the smooth part of a log {F}ano
              variety},
   JOURNAL = {Osaka J. Math.},
  FJOURNAL = {Osaka Journal of Mathematics},
    VOLUME = {32},
      YEAR = {1995},
    NUMBER = {3},
     PAGES = {637--644},
   }

\end{biblist}
\end{bibdiv}
\bigskip

\end{document}